\title{Generalized Choquet spaces} 
\author{Samuel Coskey}
\address{Samuel Coskey, Department of Mathematics, Boise State University,
1910 University Drive, Boise, ID, 83725-1555, United States}
\email{scoskey@nylogic.org}
\urladdr{boolesrings.org/scoskey}
\author{Philipp Schlicht}
\address{Philipp Schlicht, Mathematisches Institut, Universit\"at Bonn,
Endenicher Allee 60, 53115 Bonn, Germany}
\email{schlicht@math.uni-bonn.de}
\makeatletter\pretocmd{\@seccntformat}{\S}{}{}\makeatother
\DeclareMathOperator{\Iso}{Iso}
\DeclareMathOperator{\Sym}{Sym}
\DeclareMathOperator{\Col}{Col}
\newcommand{\vaan}{V\"a\"an\"anen}
\begin{document}

\begin{abstract}
  We introduce an analog to the notion of Polish space for spaces of weight $\leq\kappa$, where $\kappa$ is an uncountable regular cardinal such that $\kappa^{<\kappa}=\kappa$.  Specifically, we consider spaces in which player~II has a winning strategy in a variant of the strong Choquet game which runs for $\kappa$ many rounds.  After discussing the basic theory of these games and spaces, we prove that there is a surjectively universal such space and that there are exactly $2^\kappa$ many such spaces up to homeomorphism. We also establish a Kuratowski-like theorem that under mild hypotheses, any two such spaces of size $>\kappa$ are isomorphic by a $\kappa$-Borel function. We then consider a dynamic version of the Choquet game and show that in this case the existence of a winning strategy for player~II implies the existence of a winning tactic, that is, a strategy that depends only on the most recent move.  We also study a generalization of Polish ultrametric spaces where the ultrametric is allowed to take values in a set of size $\kappa$. We show that in this context, there is a family of universal Urysohn-type spaces, and we give a characterization of such spaces which are hereditarily $\kappa$-Baire.
\end{abstract}

\maketitle

\section{Introduction}

While descriptive set theory began as the study of definable subsets of the real line, it has since grown into a rich field with applications across many different areas.  One of the keys behind this growth was the observation that the relevant properties of the real line are also available in the broader context of Polish spaces.

Recall that a topological space is said to be \emph{Polish} if it is second countable and completely metrizable.  The Polish space, together with certain relatives such as the Polish metric space and the standard Borel space, turn out to be just the right frameworks for generalizing properties of very well-behaved topological spaces such as the real line $\RR$, the Cantor space ${}^\omega2$, and the Baire space ${}^\omega\omega$.

For example, many classes of countable or separable structures can naturally be parameterized by elements of a Polish space.  It is then possible to use the descriptive set theoretic framework to address questions about the complexity of properties or operations on these structures.  In the subfield known as Borel equivalence relations, one can compare the complexity of classification problems for various classes of structures by studying equivalence relations on spaces of such structures.

Motivated by a desire to generalize applications such as these to larger structures, a great deal of research has been dedicated to finding generalizations and analogs of results from classical descriptive set theory for larger spaces such as ${}^\kappa2$ or ${}^\kappa\kappa$.  Here, each space is endowed with the $\mathord{<}\kappa$-supported product topology. We will always make the assumption that $\kappa^{<\kappa}=\kappa$.

Many results of classical descriptive set theory do not readily generalize to higher cardinals.  Some regularity properties fail for definable subsets of ${}^{\kappa}\kappa$, for example some $\bf\Sigma^1_1$ sets do not have the $\kappa$-Baire property \cite[Theorem 4.2]{halko}.  Moreover for $\kappa>\omega$, continuous images of closed subsets of ${}^{\kappa}\kappa$ have properties unlike those for $\kappa=\omega$, for instance not every image of ${}^{\kappa}\kappa$ under a continuous injection is $\kappa$-Borel \cite{luecke-schlicht-continuous-images}.

Still, many classical results do admit generalizations to higher cardinals, at least, consistently.  For example, although disjoint $\bm{\Sigma}^1_1$ subsets of ${}^\kappa\kappa$ cannot necessarily be separated by a $\bm{\Delta}^1_1$ set, in \cite{vaan} Mekler and \vaan\ were able to establish a weak version of the Suslin separation theorem.  Regarding regularity properties, Schlicht showed in \cite{schlicht-perfect} that it is consistent that all definable subsets of ${}^{\kappa}\kappa$ have the perfect set property.

Many results about Borel equivalence relations have also been generalized.  For instance, a classical result of Lopez-Escobar states that Borel classes of countable structures are axiomatizable by a sentence of the infinitary language $\mathcal L_{\omega_1\omega}$.  In \cite{vaught}, Vaught generalized this result to the case of $\kappa$-Borel classes of structures of size $\kappa$ and the language $\mathcal L_{\kappa^+\kappa}$.  In \cite{tuuri}, Tuuri further generalized Vaught's result to deal with $\dD1$ classes of structures of size $\kappa$.  In \cite{mottoros}, Motto Ros has generalized several results of Louveau-Rosendal concerning analytic quasi-orders.  An overview of the descriptive set theory of ${}^{\kappa}\kappa$ can be found in \cite{vadim}.

Given these successes, it is natural to look for one or more general frameworks, analogous to Polish spaces or standard Borel spaces, in which to carry out generalizations of classical descriptive set theory.  It is natural to define that $X$ is $\kappa$-standard Borel iff there is a $\kappa$-Borel bijection between $X$ and a $\kappa$-Borel subset of ${}^\kappa\kappa$ (this definition has been proposed for instance in \cite{mottoros}).  Moreover in the definition of Polish topological space, the second countability can naturally be replaced with the assumption that the space has weight at most $\kappa$ (\emph{i.e.}, has a basis of size $\leq\kappa$).  However, the completeness of the compatible metric can be replaced by a variety of assumptions.

In this article we consider several such assumptions, with a focus on two of them in particular.  Our first attempt goes via the following classical characterization due to Choquet: $X$ is Polish if and only if $X$ is second countable and player II has a winning strategy in the strong Choquet game.  We will consider the analog of this notion in which the classical Choquet game is replaced with an analogous game of longer length.  Our second attempt will be to define completeness directly by considering only spaces which admit an (generalized) ultrametric which takes values in a set of size $\kappa$.

In a forthcoming work, we will use the framework developed here to study $\kappa$-Choquet groups, $\kappa$-ultrametric groups, and their actions.

The present paper is organized as follows.  In the next section, we introduce a variant of the strong Choquet game which runs for $\kappa$ many rounds.  We then use the game to define a generalization of Polish topological spaces called strong $\kappa$-Choquet spaces, and outline some elementary properties enjoyed by such spaces.  In the third section, we give an analog of Kuratowski's isomorphism theorem which states that assuming $X$ and $Y$ are $\kappa$-Choquet spaces of weight $\leq\kappa$ and size $>\kappa$ such that no point is the intersection of fewer than $\kappa$ many open sets, then $X$ and $Y$ are $\kappa$-Borel isomorphic.  We also prove that there are exactly $2^\kappa$ many $\kappa$-Choquet spaces of weight $\leq\kappa$ up to homeomorphism.

In the fourth section, we consider a dynamic variant of the strong $\kappa$-Choquet game where in each round, instead of playing an open set, the players play a set which is the intersection of fewer than $\kappa$ many open sets. We show that for spaces of weight $\leq\kappa$, the existence of a winning strategy in the dynamic game implies the existence of a winning tactic---a strategy which depends only on the most recent move.

In the fifth section, we study generalizations of Polish ultrametric spaces in which the ultrametric is replaced by a distance function which takes values in a set of size $\kappa$.  We show that, as is the case with Polish ultrametric spaces, there exists a family of universal Urysohn spaces of this type.  Finally, in the last section we give a generalization of a classical result of Debs which characterizes the hereditarily Baire subspaces of the Baire space in terms of their spherically closed subsets.

\section{Generalized Choquet spaces}

In this section, we introduce a generalization of Choquet spaces obtained by lengthening the classical Choquet game.  We then give some of the most basic properties of these spaces.  Unless it is otherwise specified, we always assume the following:
\begin{itemize} 
\item The cardinal $\kappa$ is regular and uncountable, and satisfies $\kappa^{<\kappa}=\kappa$.
\item Topological spaces $X,Y,\ldots$ are Hausdorff and regular.
\end{itemize} 

The Choquet game was originally introduced in \cite{choquet}.  We begin with our generalization.

\begin{defn}
  Let $X$ be a topological space.  The \emph{strong $\kappa$-Choquet game} in $X$ is played by two players, I (sometimes called \textsf{empty}) and II (sometimes called \textsf{nonempty}):
  \begin{center}
    \begin{tabular}{ccccccccccc}
      I  & $U_0,x_0$ & & $U_1,x_1$ & & $\cdots$ & & $U_\lambda,x_\lambda$ & & $\cdots$ & \\
      II & & $V_0$ & & $V_1$ & & $\cdots$ & & $V_\lambda$ & & $\cdots$
    \end{tabular}
  \end{center}
  In the first half of each round, $I$ plays $U_\alpha,x_\alpha$ such that $x_\alpha\in U_\alpha$ and $U_\alpha$ is a relatively open subset of $\bigcap_{\beta<\alpha}U_\beta$.  In the second half of each round, II responds with $V_\alpha$ such that $x_\alpha\in V_\alpha$ and $V_\alpha$ is a relatively open subset of $U_\alpha$. We say that II \emph{wins} the play if for all limit ordinals $\lambda\leq\kappa$, we have $\bigcap_{\alpha<\lambda}U_\alpha\neq\emptyset$.
\end{defn}

Of course, if at any limit ordinal $\lambda<\kappa$ we have $\bigcap_{\alpha<\lambda}U_\alpha=\emptyset$, then $I$ wins immediately; the run cannot and need not continue.

\begin{defn}
  We say that $X$ is a \emph{strong $\kappa$-Choquet} space if player~II has a winning strategy in the strong $\kappa$-Choquet game in $X$.
\end{defn}

We will occasionally also mention \emph{(weak) $\kappa$-Choquet spaces}, where II has a winning strategy in the simpler game in which the points $x_\alpha$ are not played and not used.

The canonical example of a strong $\kappa$-Choquet space is of course the $\kappa$-Baire space ${}^\kappa\kappa$ with the topology generated by the basic open sets of the form
\[N_s=\set{x\in{}^\kappa\kappa\mid s\subset x}
\]
where $s\in{}^{<\kappa}\kappa$.  Of course, the $\kappa$-Cantor space ${}^\kappa2$ with the analogous topology is also a fundamental example.  Each of these spaces is \emph{$\kappa$-additive}, which means that the intersection of fewer than $\kappa$ many open sets is again open.

For an example of a $\kappa$-Choquet space that is not $\kappa$-additive, we often use the linearly ordered spaces $({}^\kappa\kappa,\mathsf{lex})$ and $({}^\kappa2,\mathsf{lex})$ with the topology generated by the lexicographic open intervals.  The next result verifies that these spaces are indeed $\kappa$-Choquet.


\begin{prop}\label{prop:lex}
  The spaces $({}^\kappa\kappa,\mathsf{lex})$ and $({}^{\kappa}2,\mathsf{lex})$ are $1$-dimensional, strong $\kappa$-Choquet spaces.  Moreover, $({}^{\kappa}2,\mathsf{lex})$ is compact.
\end{prop}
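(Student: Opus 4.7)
The plan is to verify the three claims—strong $\kappa$-Choquet, $1$-dimensionality, and (for ${}^\kappa 2$) compactness—using the observation that in each case the lexicographic topology coincides with the order topology generated by the open intervals of the linear order. The key structural fact I would first establish is Dedekind completeness: every nonempty bounded subset of $({}^\kappa\kappa,\mathsf{lex})$, and hence of $({}^\kappa 2,\mathsf{lex})$, admits both a supremum and an infimum. The supremum is constructed by transfinite recursion on coordinates; for $({}^\kappa 2,\mathsf{lex})$ one sets $s(\alpha)=1$ iff some $x$ in the set satisfies $x\upharpoonright\alpha=s\upharpoonright\alpha$ and $x(\alpha)=1$, and the analogous recursion works in ${}^\kappa\kappa$.

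Given Dedekind completeness, compactness of $({}^\kappa 2,\mathsf{lex})$ is immediate: $0^\kappa$ and $1^\kappa$ are a global minimum and maximum, and it is a classical theorem that the order topology on a Dedekind-complete bounded linear order is compact. The $1$-dimensionality follows from a general fact about order topologies, namely that the boundary of a basic open interval $(a,b)$ lies in $\{a,b\}$ and is therefore zero-dimensional, forcing small inductive dimension at most $1$. In the case of ${}^\kappa 2$ one can in fact do better: the jumps between pairs $s01^\kappa$ and $s10^\kappa$ for $s\in{}^{<\kappa}2$ yield a basis of clopen intervals.

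For the strong $\kappa$-Choquet game, player~II's strategy is to play, at round $\alpha$, a basic open interval $V_\alpha=(c_\alpha,d_\alpha)$ containing $x_\alpha$ whose closed hull $[c_\alpha,d_\alpha]$ lies inside $U_\alpha$, using an open half-ray to handle the global extrema $0^\kappa$ and (in ${}^\kappa 2$) $1^\kappa$. Such a $V_\alpha$ exists by regularity together with the fact that basic open intervals form a basis. At a limit $\lambda\leq\kappa$, the closed intervals $[c_\alpha,d_\alpha]$ are nested, so by Dedekind completeness $c:=\sup_{\alpha<\lambda}c_\alpha\leq\inf_{\alpha<\lambda}d_\alpha=:d$, and the interval $[c,d]$ is nonempty and contained in $\bigcap_{\alpha<\lambda}[c_\alpha,d_\alpha]\subseteq\bigcap_{\alpha<\lambda}U_\alpha$, so player~II has not yet lost.

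I expect the main obstacle to be verifying that player~II can always carry out the shrinking step—that is, finding a basic open interval whose \emph{closed} hull lies inside $U_\alpha$. This requires combining regularity with a careful treatment of the jumps in ${}^\kappa 2$ (where the closure of $(a,b)$ can fail to equal $[a,b]$ when $a$ or $b$ sits on the wrong side of a jump) and of the global extrema. Once one establishes that basic intervals $(a,b)$ with $a,b$ chosen to be jump endpoints or sufficiently generic points form a neighborhood basis at every $x$, the strategy description goes through uniformly in both spaces.
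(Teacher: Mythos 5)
Your treatment of completeness, compactness, and the Choquet game is essentially the paper's own argument: the paper likewise reduces everything to the least-upper-bound property (citing a lemma of Gillman--Jerison rather than building the supremum by coordinate recursion as you do), derives compactness of $({}^{\kappa}2,\mathsf{lex})$ from that property, and has player~II answer $U$ with an open interval whose closed hull lies inside $U$, so that at limits the nested closed bounded intervals have nonempty intersection by completeness. That part of the proposal is fine, modulo the same bookkeeping about relatively open sets that the paper also elides.

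The gap is in the dimension claim, in two respects. First, you only prove the upper bound (small inductive dimension at most $1$); the proposition asserts the spaces \emph{are} $1$-dimensional, and the paper's proof explicitly argues that they are not zero-dimensional, so the lower bound is part of what must be shown. For $({}^{\kappa}\kappa,\mathsf{lex})$ it does hold: that order is densely ordered (between $x<y$ with first disagreement at $\alpha$ one can insert $x\restriction(\alpha+1)^\smallfrown(x(\alpha+1)+1)^\smallfrown 0^{\kappa}$), so Dedekind completeness of bounded sets makes closed bounded intervals connected, and you never address this. Second, and more seriously, your closing remark that the jump pairs $s^\smallfrown 0^\smallfrown 1^{\kappa}$, $s^\smallfrown 1^\smallfrown 0^{\kappa}$ yield a basis of clopen intervals in $({}^{\kappa}2,\mathsf{lex})$ is a correct observation --- for any $a<x$ the first coordinate of disagreement produces such a jump inside $[a,x]$ --- and it proves that $({}^{\kappa}2,\mathsf{lex})$ is \emph{zero}-dimensional, exactly as for the split interval when $\kappa=\omega$. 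That directly contradicts the statement you are proving, yet you present it as a bonus rather than recognizing the conflict. You need to either refute the clopen-basis claim or flag that the $1$-dimensionality assertion for $({}^{\kappa}2,\mathsf{lex})$ cannot stand as written; note that the paper's one-line justification (``not zero-dimensional follows from the least-upper-bound property'') tacitly uses density of the order, which $({}^{\kappa}2,\mathsf{lex})$ lacks, so your observation in fact exposes a genuine issue with the statement for that space rather than a flaw in your jump argument.
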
 

\begin{proof}
  By \cite[Lemma~13.17]{gillman}, every subset of $({}^\kappa2,\mathsf{lex})$ has a least upper bound and greatest lower bound, and this property characterizes compactness for linearly ordered topological spaces.  Similarly, every bounded subset of $({}^\kappa\kappa,\mathsf{lex})$ has a least upper bound and greatest lower bound.  It follows from this property that each of these spaces is not zero-dimensional, and it is clear that each is at most one-dimensional.

  Next, we show that player~II has a winning strategy in the strong $\kappa$-Choquet game in each of these spaces.  Without loss of generality we can suppose that the players play intervals (see Lemma~\ref{strategy versus basic strategy}, below).  Furthermore, it follows from the least upper bound property that the decreasing intersection of closed and bounded intervals is always nonempty.  Thus, in response to a set $U$, player~II can play any open interval $V$ such that $\overline{V}\subset U$, and this strategy will be a winning one.
\end{proof}

We can use this last example to generate $2^\kappa$ many distinct examples of $\kappa$-Choquet spaces.  In Corollary~\ref{number of spaces} below, we will show conversely that there are at most $2^{\kappa}$ many homeomorphism types of such spaces.

\begin{prop}\label{many homeomorphism types} 
  There are at least $2^{\kappa}$ many homeomorphism types of connected strong $\kappa$-Choquet spaces of weight $\leq\kappa$.
\end{prop}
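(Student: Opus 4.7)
The plan is to produce $2^{\kappa}$ many pairwise non-homeomorphic connected strong $\kappa$-Choquet spaces of weight $\leq\kappa$ by a bouquet construction indexed by subsets of $\kappa$, starting from the compact connected example $L=({}^{\kappa}2,\mathsf{lex})$ of Proposition~\ref{prop:lex}.

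First, I would construct an auxiliary family $\{(R_{\alpha},b_{\alpha}):\alpha<\kappa\}$ of pointed, pairwise non-homeomorphic connected strong $\kappa$-Choquet spaces of weight $\leq\kappa$. A natural candidate is to take $R_{\alpha}$ to be $\alpha+1$ many copies of $L$ concatenated in lexicographic order with consecutive endpoints identified so as to produce a linear continuum, and let $b_{\alpha}$ denote the top element. Each $R_{\alpha}$ is then compact, connected, of weight $\leq\kappa$, and strong $\kappa$-Choquet by the same least-upper-bound argument as in Proposition~\ref{prop:lex}. Distinct $\alpha$'s should give non-homeomorphic $R_{\alpha}$ via an ordinal-theoretic invariant of the internal structure of the limit points, for instance via a Cantor--Bendixson-like analysis of the subset of points with character strictly less than $\kappa$.

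Second, fix a discrete sequence $\langle p_{\alpha}:\alpha<\kappa\rangle$ of distinct points in $L$. For each $A\subseteq\kappa$, form $X_{A}$ as the quotient of $L\sqcup\bigsqcup_{\alpha\in A}R_{\alpha}$ identifying $p_{\alpha}\sim b_{\alpha}$ for each $\alpha\in A$. Then $X_{A}$ is connected (a union of connected pieces all meeting $L$), has weight $\kappa\cdot\kappa=\kappa$, and is strong $\kappa$-Choquet: player~II combines winning strategies on $L$ and on each $R_{\alpha}$, switching to the $R_{\alpha}$-strategy the first time play enters that branch and never returning (since play is monotonically decreasing). At a limit stage of length $\leq\kappa$, either the play has eventually stayed inside a single summand, so $\bigcap U_{\beta}\neq\emptyset$ by the winning condition there, or every $U_{\beta}$ contains some attachment point $p_{\alpha}\sim b_{\alpha}$, which then lies in the intersection.

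Finally, one recovers $A$ from the topology of $X_{A}$: the attachment points are those $p\in X_{A}$ for which $X_{A}\setminus\{p\}$ contains a connected component not arising from a component of $L\setminus\{p\}$, and the homeomorphism type of each attached branch then identifies the corresponding ordinal $\alpha$. Thus $A\mapsto [X_{A}]$ is injective, yielding $2^{\kappa}=|\mathcal{P}(\kappa)|$ homeomorphism types. The main obstacle is the construction of the auxiliary family in the first step, since naive invariants such as the cofinality of the basepoint only yield boundedly many distinct types; the proof requires a topological invariant of $R_{\alpha}$ sufficiently refined to distinguish all $\kappa$ many ordinals.
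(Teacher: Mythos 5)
There are two gaps here, one of which you have flagged yourself. First, your base space $L=({}^{\kappa}2,\mathsf{lex})$ is not connected: for every $s\in{}^{<\kappa}2$ the points $s^\smallfrown0^\smallfrown1^{\kappa}$ and $s^\smallfrown1^\smallfrown0^{\kappa}$ are adjacent in the lexicographic order, and such a jump splits the space into two clopen halves; since between any two distinct points there is such a jump, $({}^{\kappa}2,\mathsf{lex})$ is in fact totally disconnected (for $\kappa=\omega$ it is just the Cantor set). Consequently every $X_A$ you build is disconnected, and the cut-point/connected-component analysis in your last step has nothing to work with. The connected example is $({}^{\kappa}\kappa,\mathsf{lex})$, which has no jumps (no point has an immediate successor) while still having least upper bounds of bounded sets; that is the space the paper's proof actually uses.

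Second, and more seriously, the step you yourself call ``the main obstacle'' is genuinely missing: you never produce $\kappa$ many pairwise non-homeomorphic pointed pieces $R_{\alpha}$, and the candidate you propose is doubtful, since a lexicographic interval of this kind decomposes as an ordered concatenation of copies of itself in many ways, so concatenating $\alpha+1$ copies need not change the homeomorphism type, and no invariant separating all $\kappa$ ordinals is exhibited. The paper's proof shows that this difficulty can be sidestepped entirely: one needs only \emph{two} attachment types, namely a copy of $X=({}^{\kappa}\kappa,\mathsf{lex})$ and a copy $\overline{X}$ of the same space with a maximum point adjoined, and one attaches one of the two to the point $\alpha^\smallfrown0^{\kappa}$ of the base copy for \emph{every} $\alpha<\kappa$, choosing $X$ when $\alpha\in A$ and $\overline{X}$ otherwise. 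The set $A$ is then recovered because homeomorphisms preserve cut points: the attachment points are distinguishable and linearly ordered along the base copy, and the branch attached at the $\alpha$-th one has a non-cut endpoint exactly when $\alpha\notin A$. (A smaller remark: your limit-stage argument for the Choquet strategy is also not quite right as stated, since the attachment point witnessing nonemptiness may vary with $\beta$ and $U_{\beta}$ need not be connected; as in the paper, player~II should instead confine the play to a single summand as soon as player~I's point leaves the base copy.)
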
 

\begin{proof}
  For each $A\subset \kappa$, we will construct a space $X(A)$ in such a way that no two of them are homeomorphic.  In our construction, we let $X=({}^{\kappa}\kappa,\mathsf{lex})$ and let $\overline{X}=X$ together with a maximum element.  To build $X(A)$ begin with a copy of $X$, and for each $\alpha\in A$ attach another copy of $X$ to $\alpha^\smallfrown0^{\kappa}$, and for each $\alpha\notin A$ attach a copy of $\overline{X}$ to $\alpha^\smallfrown0^{\kappa}$.  Then $X(A)$ is strong $\kappa$-Choquet, since player~II can follow the winning strategy for $X$ as long as player~I plays points in the base copy, and follow the winning strategy for one of the new copies of $X$ once a point played by player~I leaves the base copy.

  To see that no two of these are homeomorphic, note that every non-endpoint of $X$ or $\overline{X}$ is a \emph{cut point}, \emph{i.e.},\ removing it from the space leaves exactly two connected components. Since any homeomorphism preserves cut points, $X(A)$ and $X(B)$ are not homeomorphic if $A\neq B$.
\end{proof} 

As we saw in the proof of Proposition~\ref{prop:lex}, we will occasionally have use for the following result, which states that in the $\kappa$-Choquet game we may assume that the players use basic open sets instead of just open sets.

\begin{lem}\label{strategy versus basic strategy} 
  Suppose that $X$ has weight $\leq\kappa$.  Suppose that one of the players has a winning strategy in the (strong) $\kappa$-Choquet game. Then this player has a winning strategy in which she only plays basic open sets (as usual, intersected with the run so far).
\end{lem}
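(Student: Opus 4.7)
Fix a basis $\mathcal B$ of $X$ with $|\mathcal B|\leq\kappa$. The plan is the standard \emph{shadow-game} construction: the player with the winning strategy runs it in an auxiliary imagined play alongside the real play, and in the real play makes a basic-open refinement of each prescribed move.

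If player~II has a winning strategy $\sigma$, define $\sigma'$ by transfinite recursion. Maintain an imagined play in which I's moves coincide with I's real moves $(U_\beta,x_\beta)$ and II's moves $\tilde V_\beta$ are the outputs of $\sigma$ on the imagined history. Then $\sigma'$ plays any basic $V_\alpha\in\mathcal B$ with $x_\alpha\in V_\alpha$ and $V_\alpha\cap U_\alpha\subseteq\tilde V_\alpha$, which exists because $\tilde V_\alpha$ is a relative neighborhood of $x_\alpha$ in $U_\alpha$. The inclusion $V_\alpha\cap U_\alpha\subseteq\tilde V_\alpha$ guarantees that I's next real move $U_{\alpha+1}$ is a fortiori a legal I-move in the imagined play. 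Since I's moves coincide in the two plays, the intersections $\bigcap_{\beta<\lambda}U_\beta$ agree at every limit $\lambda\leq\kappa$ and are nonempty because $\sigma$ is winning.

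The case where player~I has a winning strategy $\tau$ is dual. Maintain an auxiliary play in which I's moves $(\tilde U_\beta,\tilde x_\beta)=\tau(\ldots)$ follow $\tau$ and II's moves are set to $\tilde V_\beta=V_\beta$, the real II-moves. At each round $\alpha$, writing $\tilde U_\alpha=\tilde W_\alpha\cap\bigcap_{\beta<\alpha}\tilde U_\beta$ with $\tilde W_\alpha$ open in $X$, let $\tau'$ play a basic $B_\alpha\in\mathcal B$ with $\tilde x_\alpha\in B_\alpha\subseteq\tilde W_\alpha$, together with $x_\alpha=\tilde x_\alpha$. A transfinite induction---using the standard convention $\tilde U_{\beta+1}\subseteq\tilde V_\beta$ and the choice $B_\beta\subseteq\tilde W_\beta$---establishes that $\tilde U_\beta\subseteq V_{\beta-1}\subseteq U_{\beta-1}$ at successor stages and $\bigcap_{\gamma<\lambda}\tilde U_\gamma=\bigcap_{\gamma<\lambda}U_\gamma$ at limits. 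These in turn imply that $\tilde x_\alpha$ lies in the current real intersection (so $\tau'$'s move is legal), and that $V_\beta$ is relatively open in $\tilde U_\beta$ (so the auxiliary play is a legal play against $\tau$). Since $\tau$ wins the auxiliary play, $\bigcap_{\beta<\lambda}\tilde U_\beta=\emptyset$ at some limit $\lambda\leq\kappa$, and the equality of limit intersections gives $\bigcap_{\beta<\lambda}U_\beta=\emptyset$, so $\tau'$ wins.

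The main point requiring care is the transfinite induction in the second case, where the nesting relations must be verified jointly with the legality of the auxiliary play at both successor and limit stages. This is routine given the choice $B_\beta\subseteq\tilde W_\beta$ and the standard game convention; the whole argument is a direct transfinite analog of the classical countable proof, and no genuine obstacle arises.
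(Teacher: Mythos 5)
Your proof is correct and, for player~II, is essentially the paper's own argument: run a shadow play in which II follows $\sigma$, answer in the real play with a basic open refinement of $\sigma$'s output, and note that the limit intersections of the two plays coincide. You additionally carry out the dual shadow argument for player~I, which the paper's proof explicitly omits; that case is handled correctly (the key point that each real move $U_\alpha=B_\alpha\cap\bigcap_{\beta<\alpha}V_\beta$ is relatively open in $\tilde U_\alpha$ and cofinal with it at limits does go through).
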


\begin{proof}
  We only consider the case of player~II in the strong $\kappa$-Choquet game.  Let $\sigma$ be a winning strategy for player~II in the strong $\kappa$-Choquet game.  Consider the modified strategy in which player~II always plays a basic open subset of the set that she would have played according to $\sigma$. It is clear that if $U_0,x_0,V_0,U_1,x_1,V_1\ldots$ is a run of this game, and $\lambda$ is a limit ordinal, then
\[\bigcap_{\alpha<\lambda}V_\alpha=\bigcap_{\alpha<\lambda}U_\alpha=\bigcap_{\alpha<\lambda}\sigma(U_0,x_0,\ldots,U_\alpha,x_\alpha)
\]
This implies that the modified strategy consists of valid plays, and that it is a winning strategy if $\sigma$ is.
\end{proof} 

Recall that a space is \emph{$\kappa$-Baire} if the intersection of $\kappa$ many dense open sets is dense, and \emph{weakly $\kappa$-Baire} if the intersection of $\kappa$ many dense open sets is nonempty.  Although each of the example spaces listed above has the additional property that it is $\kappa$-Baire, it worth noting that not all $\kappa$-Choquet spaces turn out to be weakly $\kappa$-Baire.  For instance, ${}^\omega\omega$ is a strong $\kappa$-Choquet space for somewhat trivial reasons, but assuming as usual that $\kappa^{<\kappa}=\kappa$, then ${}^\omega\omega$ is not $\kappa$-Baire for $\kappa>\omega$.

\begin{prop}[cf.\ {\protect\cite[Theorem~8.11]{kechris}}] 
  \label{characterization of Baire by games}
  Suppose that $X$ is a space of weight $\leq\kappa$ such that the intersection of any decreasing sequence of fewer than $\kappa$ many open sets has nonempty interior. Then we have:
\begin{enumerate}
\item $X$ is not $\kappa$-Baire if and only if player I has a winning strategy in the modified version of the weak $\kappa$-Choquet game where player II plays first at limits but the winning condition remains the same.
\item $X$ is not weakly $\kappa$-Baire if and only if player I has a winning strategy in the modified version if the weak $\kappa$-Choquet game where player II begins and plays first at limits but the winning condition remains the same.
\end{enumerate} 
\end{prop}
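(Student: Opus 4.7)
The plan is to adapt the proof of Kechris's classical Banach-Mazur characterization (Theorem~8.11 of \cite{kechris}) to the $\kappa$-length setting, with the hypothesis on $<\kappa$ intersections playing its expected role at limit stages of the construction.

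For the ``$\Leftarrow$'' direction of both (a) and (b), suppose the failure of (weak) $\kappa$-Baireness is witnessed by $\kappa$ dense open sets $(D_\alpha)_{\alpha<\kappa}$. In case~(a) I also fix a nonempty open $U$ disjoint from $\bigcap_\alpha D_\alpha$ and let player~I open with $U_0=U$; in case~(b) player~II begins with any $V_0$ and I use the stronger witness $\bigcap_\alpha D_\alpha=\emptyset$. At each successor $\alpha+1$ player~I responds with $U_{\alpha+1}=V_\alpha\cap D_\alpha$, which is nonempty open by density. At limits player~II moves first; if she cannot, I wins immediately by the winning condition, and otherwise play continues. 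By stage $\kappa$ the intersection of the $U_\alpha$'s is contained in $\bigcap_\alpha D_\alpha$ (and in $U$, for~(a)), hence empty, so I wins.

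For the ``$\Rightarrow$'' direction, let $\sigma$ be a winning strategy for player~I, which by Lemma~\ref{strategy versus basic strategy} may be taken to produce basic open sets. In case~(a) set $U_0=\sigma(\emptyset)$; in case~(b) there is no distinguished initial move. I build inductively a tree $T=\bigcup_{\alpha<\kappa}A_\alpha$ of $\sigma$-consistent positions, where each $A_\alpha$ consists of positions of length $\alpha+1$ whose last moves $V_\alpha$ are pairwise disjoint basic open sets with dense union $D_\alpha'$ in $U_0$ (resp.\ in $X$), and each member of $A_\alpha$ uniquely extends a member of $A_\beta$ for every $\beta<\alpha$. At a successor $\alpha+1$, I extend each position in $A_\alpha$ by $\sigma$'s response and then select a maximal antichain of basic open refinements for II, using a fixed enumeration of the basis to preserve density inside each predecessor. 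At a limit $\lambda$, for each branch $b$ through $\bigcup_{\beta<\lambda}A_\beta$ the hypothesis on $<\kappa$ intersections guarantees that $\bigcap_{\beta<\lambda}U_\beta^b$ has nonempty interior, inside which II may legally move; these moves, together with $\sigma$'s responses, form $A_\lambda$.

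Finally, I set $D_\alpha=D_\alpha'\cup(X\setminus\overline{U_0})$ in case~(a) and $D_\alpha=D_\alpha'$ in case~(b), obtaining $\kappa$ dense open subsets of $X$. Any $x\in U_0\cap\bigcap_\alpha D_\alpha$ (resp.\ $x\in\bigcap_\alpha D_\alpha$) selects, by the pairwise disjointness of each $A_\alpha$, a unique branch of $T$, which is a $\sigma$-consistent play of length $\kappa$ with $x$ lying in every $V_\alpha\subseteq U_\alpha$. This contradicts that $\sigma$ wins, so $\bigcap_\alpha D_\alpha$ is not dense in case~(a) and is empty in case~(b), as required. The main obstacle will be the limit-stage construction of $A_\lambda$: one must verify that the surviving branches through $\bigcup_{\beta<\lambda}A_\beta$ cover a dense portion of $U_0$ (resp.\ $X$), and preserving this density through transfinite induction requires careful bookkeeping. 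The $<\kappa$-intersection hypothesis is precisely what keeps enough branches alive for the induction to proceed.
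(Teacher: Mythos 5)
Your proposal follows essentially the same route as the paper's proof (which, note, is only written out for part (a)): the same direct construction of player I's strategy from a family of dense open sets witnessing the failure of (weak) $\kappa$-Baireness, and for the converse the same tree of $\sigma$-consistent positions whose last moves at each level are pairwise disjoint open sets with dense union, followed by the observation that a point in the intersection of all levels would pick out a $\sigma$-consistent run that player I loses. The one place you genuinely diverge is exactly the place you flag as the main obstacle: you try to carry the invariant ``$D'_\lambda$ is dense in $U_0$'' through limit stages $\lambda$, and your justification---that the $<\kappa$-intersection hypothesis keeps branches alive---only yields that each \emph{individual} branch's intersection has nonempty interior; it does not by itself yield that the union of those interiors over all branches is dense, which is what the invariant requires. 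The paper avoids having to prove this at all by a dichotomy at the end: let $\gamma\leq\kappa$ be least such that $\bigcap_{\alpha<\gamma}G_\alpha$ is not dense in $U_\emptyset$. If $\gamma<\kappa$, then fewer than $\kappa$ many dense open sets already have non-dense intersection (pad with $X\smallsetminus\overline{U_\emptyset}$ and repeat to get $\kappa$ many), so $X$ is not $\kappa$-Baire with no further work; only if $\gamma=\kappa$ does one invoke that $\sigma$ is winning to conclude $\bigcap_{\alpha<\kappa}G_\alpha=\emptyset$. Your invariant can in fact be rescued by applying the hypothesis twice---given nonempty open $W\subseteq U_0$, the decreasing sequence $(W\cap D'_\beta)_{\beta<\lambda}$ has an intersection with nonempty interior $V$; any $x\in V$ lies on a unique branch $b$, and $(V\cap U^b_\beta)_{\beta<\lambda}$ again has nonempty interior, an open subset of $W$ contained in the interior of $\bigcap_{\beta<\lambda}U^b_\beta$---but the least-$\gamma$ dichotomy is cleaner and is the step you should adopt to close the argument.
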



\begin{proof} 
  We address only part (a).  Suppose first that $X$ is not $\kappa$-Baire. Then there exists a sequence $(U_{\alpha})_{\alpha<\kappa}$ of dense open sets such that $\bigcap_{\alpha<\kappa} U_{\alpha}$ is not dense, so there is a nonempty basic open set $U$ with $U\cap \bigcap_{\alpha<\kappa}U_{\alpha}=\emptyset$. Let player I play $U\cap U_0$ in her first move. Then by hypothesis on $X$, it is valid for player~I to simply play $U_{\alpha}$ (intersected with the run so far) in each round $\alpha$. Moreover, this is clearly a winning strategy for player~I.

  Conversely, suppose that player~I has a winning strategy $\sigma$ for this game.  We will construct a $\kappa$-closed subtree $T\subset{}^\kappa\kappa$ and sets $U_s$ for $s\in T$ successor such that:
\begin{enumerate}
\item For each $b\in[T]$ the sequence $U_{b\restriction0},U_{b\restriction1},\ldots$ forms a valid sequence of moves for player~I according to $\sigma$; 
\item $U_s\cap U_t=\emptyset$ for $s\neq t$ in $T$ of the same length; and
\item For each $\alpha$ successor, the set $G_{\alpha+1}=\bigcup\set{U_s\mid\dom(s)=\alpha+1}$ is dense and open in $G_{\alpha}=\bigcup\set{U_s\mid\dom(s)=\alpha}$.
\end{enumerate}
To carry out the construction, we initially let $U_\emptyset$ denote the starting move of player~I.  Given $s\in T$ such that $\dom(s)$ is a successor, we let $U_{s^\smallfrown\beta}$ enumerate a maximal pairwise disjoint sequence of $\sigma$'s responses to a valid player~II response to the run $U_{s\restriction0},\ldots,U_{s\restriction\alpha},\ldots,U_s$ ($\alpha$ a successor ordinal).  We close $T$ under limits, and for $s\in T$ such that $\dom(s)$ is limit we define its successors $U_{s^\smallfrown\beta}$ similarly.
    
  To show that $X$ is not $\kappa$-Baire, let $\gamma\leq\kappa$ be least such that $\bigcap_{\alpha<\gamma} G_{\alpha}$ is not dense in $U_{\emptyset}$.  If $\gamma<\kappa$, then $\bigcap_{\alpha<\gamma} G_{\alpha}$ is not dense and hence $X$ is not $\kappa$-Baire.  If $\gamma=\kappa$, then $\bigcap_{\alpha<\kappa} G_{\alpha}$ is empty, since $\sigma$ is a winning strategy for player I, and hence $X$ is not $\kappa$-Baire.
\end{proof}

We remark that as in the classical case, in Proposition~\ref{characterization of Baire by games}(a) the weak $\kappa$-Choquet game cannot be replaced by the strong $\kappa$-Choquet game.  To see this, let $T$ be a subtree of ${}^{<\kappa}\kappa$ which is isomorphic to ${}^{<\kappa}2$ and let $D$ be a dense subset of $[T]$ of size $\kappa$.  We consider the space $X=({}^\kappa\kappa\smallsetminus[T])\cup D$.  Then $X$ is $\kappa$-Baire and so player~I does not have a winning strategy in the weak $\kappa$-Choquet game on $X$.  But there is a strategy for player~I in the strong $\kappa$-Choquet game by playing basic open subsets of $[T]$ while successively avoiding elements of $D$.

We now briefly address inheritance and preservation of the strong $\kappa$-Choquet property. The strong $\kappa$-Choquet property is clearly inherited by open subsets, as well as subsets which are the intersection of $<\kappa$ many open sets. However, the property is not necessarily inherited by closed subsets. For example, the subspace of ${}^\kappa\kappa$ consisting of those $x$ such that $x(i)\neq0$ for all but finitely many $i<\kappa$ is neither $\omega$-Choquet nor $\omega$-Baire.  The following proposition gives some preservation properties that do hold for strong $\kappa$-Choquet spaces.

\begin{prop}[cf.\ \protect{\cite[Theorem~4.1.2]{gao}}]
  \label{images of strong Choquet spaces}
  \begin{enumerate}
  \item If $X$ is strong $\kappa$-Choquet and $f\colon X\to Y$ is continuous, open, and surjective, then $Y$ is strong $\kappa$-Choquet.
  \item The $\mathord<\kappa$-supported product of $\kappa$ many strong $\kappa$-Choquet spaces is strong $\kappa$-Choquet.
  \end{enumerate}
\end{prop}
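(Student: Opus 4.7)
My plan is to handle the two parts in turn, using the winning strategies available in the source spaces to construct winning strategies in the target spaces.

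For part (a), fix a winning strategy $\sigma$ for player~II in the strong $\kappa$-Choquet game on $X$. I will construct a strategy $\tau$ for player~II in $Y$ by simulating a companion play in $X$. Inductively, suppose a partial play $(U_\alpha,y_\alpha)_{\alpha<\gamma}$ has been made in $Y$ together with II's responses $V_\alpha=f(V_\alpha^*)$ coming from $\sigma$-moves $V_\alpha^*=\sigma(U_0^*,x_0,\ldots,U_\alpha^*,x_\alpha)$. When player~I plays $(U_\gamma,y_\gamma)$, set
\[
U_\gamma^* = f^{-1}(U_\gamma) \cap \bigcap_{\beta<\gamma} V_\beta^*,
\]
choose $x_\gamma\in f^{-1}(y_\gamma)\cap U_\gamma^*$, let $V_\gamma^*=\sigma(\ldots,U_\gamma^*,x_\gamma)$, and respond in $Y$ with $V_\gamma=f(V_\gamma^*)$, which is open by openness of $f$, contains $y_\gamma=f(x_\gamma)$, and is a subset of $U_\gamma$. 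At successor stages the required lift of $y_\gamma$ is immediate from $y_\gamma\in V_{\gamma-1}=f(V_{\gamma-1}^*)$. At any limit $\lambda\le\kappa$, the $Y$-winning condition holds because $\bigcap_{\alpha<\lambda}U_\alpha^*\neq\emptyset$ by $\sigma$'s winning property, whence $\bigcap_{\alpha<\lambda}U_\alpha\supseteq f(\bigcap_{\alpha<\lambda} U_\alpha^*)\neq\emptyset$.

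For part (b), write $X=\prod_{i<\kappa}X_i$ with the ${<}\kappa$-supported product topology, and let $\sigma_i$ be a winning strategy for player~II on $X_i$. Since $X$ has weight $\le\kappa$, by Lemma~\ref{strategy versus basic strategy} it suffices to produce a strategy in which II plays basic open sets $\prod_i W^i$ with support $\{i:W^i\ne X_i\}$ of size ${<}\kappa$. When player~I plays $(U_\alpha,x_\alpha)$, II first refines to a basic open $W_\alpha=\prod_i W_\alpha^i\subseteq U_\alpha\cap V_{\alpha-1}$ containing $x_\alpha$. The supports $\operatorname{supp}(W_\alpha)$ are nondecreasing in $\alpha$, so each coordinate $i$ has a well-defined round $\alpha_i$ of first entry into the support. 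For each $i$, the projected plays from round $\alpha_i$ onward, reindexed by ordinal subtraction of $\alpha_i$, constitute a valid run of the strong $\kappa$-Choquet game in $X_i$, to which II applies $\sigma_i$ to obtain $V_\alpha^i$; for $i$ not yet active, set $V_\alpha^i=X_i$. Then II plays $V_\alpha=\prod_i V_\alpha^i$, whose support lies inside $\operatorname{supp}(W_\alpha)$. At any limit $\lambda\le\kappa$, by regularity of $\kappa$ the total support $\bigcup_{\alpha<\lambda}\operatorname{supp}(W_\alpha)$ has size ${<}\kappa$ whenever $\lambda<\kappa$, and for each coordinate $i$ the intersection $\bigcap_{\alpha<\lambda}W_\alpha^i$ is nonempty: trivially when $i$ is outside the total support, and by $\sigma_i$'s winning property otherwise. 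Hence $\bigcap_{\alpha<\lambda}U_\alpha\supseteq\prod_i\bigcap_{\alpha<\lambda}W_\alpha^i\neq\emptyset$.

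The main technical difficulty lies in the inductive limit step of part (a): one must exhibit a point in $f^{-1}(y_\gamma)\cap\bigcap_{\beta<\gamma}V_\beta^*$, not merely witness that $y_\gamma$ has a preimage in each $V_\beta^*$ separately. The key ingredients will be decreasing-ness of the $V_\beta^*$ that comes from following $\sigma$, the game rule $y_\gamma\in U_\gamma\subseteq\bigcap_\beta V_\beta=\bigcap_\beta f(V_\beta^*)$, and openness of $f$, which must be combined with a coherent simultaneous choice of the lifts $x_\alpha$ at earlier stages so that limits of preimages remain preimages. The corresponding (milder) subtlety in part (b) is the reindexing used to treat each coordinate game as a fresh play of $\sigma_i$ from round $0$, which is handled by ordinal arithmetic once the round $\alpha_i$ of first entry is fixed.
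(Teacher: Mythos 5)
Your overall architecture in both parts matches the paper's: simulate a companion run in the source space and push player~II's responses forward for part (a), and run the coordinate strategies in parallel on a basic-open refinement for part (b). Part (b) is correct and is essentially the paper's argument; the paper avoids your reindexing by feeding each coordinate strategy the full-length run in which $U_{\beta,i}=X_i$ for every round $\beta$ before coordinate $i$ becomes active (repeated plays of the whole space are legal moves), but this difference is cosmetic.

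Part (a), however, is not complete as written. You correctly isolate the crux --- producing $x_\gamma\in f^{-1}(y_\gamma)\cap\bigcap_{\beta<\gamma}V_\beta^*$ at a limit $\gamma$ --- but the ingredients you list do not yield it. From $y_\gamma\in U_\gamma\subseteq\bigcap_{\beta<\gamma}f(V_\beta^*)$ you only get a preimage of $y_\gamma$ in each $V_\beta^*$ separately; images do not commute with decreasing intersections ($f(\bigcap_\beta V_\beta^*)$ can be a proper subset of $\bigcap_\beta f(V_\beta^*)$ even for continuous open surjections, e.g.\ the projection from $C\times\omega$ onto the Cantor set $C$ with decreasing open sets $A_\beta\times\{0\}\cup C\times[\beta,\omega)$), and openness of $f$ does not repair this. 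Nor does a ``coherent simultaneous choice of the lifts $x_\alpha$'' obviously help: the points $y_\alpha$ played by player~I need not converge to $y_\gamma$, so no limit of earlier lifts need be a preimage of $y_\gamma$. The paper's outline sidesteps the issue by having the simulated player~I play the full preimages $f^{-1}(U_\alpha)$ rather than $f^{-1}(U_\alpha)\cap\bigcap_{\beta<\alpha}V_\beta^*$: then any preimage of $y_\gamma$ is a legal choice of $x_\gamma$, since $f^{-1}(U_\gamma)\subseteq\bigcap_{\beta<\gamma}f^{-1}(U_\beta)$ automatically, and the winning condition transfers by surjectivity because $\bigcap_{\beta<\lambda}f^{-1}(U_\beta)=f^{-1}\bigl(\bigcap_{\beta<\lambda}U_\beta\bigr)$. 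That simulation is legitimate under the game rule as literally stated in the paper, where player~I's set need only be relatively open in the intersection of her own previous sets; under the stricter reading in which player~I must play inside player~II's responses (which is what your formulation assumes), the limit-stage lift remains a genuine unproved step, and you should either adopt the weaker pull-back or supply an actual argument for the lift.
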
 

\begin{proof}[Proof outline] 
  (a) Given a winning strategy $\tau$ for II in $X$, we construct a winning strategy for II in $Y$ as follows.  Given a run $U_0,y_0,\ldots,U_\alpha,y_\alpha$, let $V_\alpha$ be the result of $\tau$ applied to the run $f^{-1}(U_0),x_0,\ldots,f^{-1}(U_\alpha),x_\alpha$, where $x_\beta$ is any point in $f^{-1}(y_\beta)$.  We then let II respond with $f(V_\alpha)$.  Since $f$ is open, this is a valid move, and it is easy to see that $\tau$ is winning implies this strategy is winning.
    
  (b) Given a run $U_0,x_0,\ldots,U_\alpha,x_\alpha$ in $\prod_{i\in I}X_i$ with $\alpha<\kappa$, we can suppose without loss of generality that each $U_\beta$ for $\beta\leq\alpha$ is equal to a basic open set of the form $\prod_{i\in I} U_{\beta,i}$, where there is $I_\beta\subset I$ with $|I_\beta|<\kappa$ such that  $U_{\beta,i}=X_i$ for all $i\notin I_{\beta}$, intersected with the run so far.  Let $\pi_i\colon \prod_{j\in I}X_j\rightarrow X_i$ denote the projection onto the $i^{\mathrm{th}}$ coordinate.  For each $i$, the sequence $U_{0,i},\pi_i(x_0),\ldots,U_{\alpha,i},\pi_i(x_\alpha)$ determines a run of the strong Choquet game in $X_i$, for which player~II has a response $V_{\alpha,i}$ given by a winning strategy. We can suppose that $V_{\alpha,i}=X_i$ for all $i\notin I_{\alpha}$, so that in $X$ we may let player~II respond with $V_\alpha=\prod_{i\in I} V_{\alpha,i}$. Since player~II wins the game in each $X_i$, the set $\bigcap_{\alpha<\gamma} V_{\alpha,i}$ is nonempty for every $i\in I$ and $\gamma\leq\kappa$, and it follows that $\bigcap_{\alpha<\gamma} V_{\alpha}=\prod_{i\in I}\bigcap_{\alpha<\gamma} V_{\alpha,i}$ is nonempty.
\end{proof} 


\section{Borel isomorphisms} 

In this section we will establish a generalization of the Kuratowski isomorphism theorem for strong $\kappa$-Choquet spaces.  Recall that the classical Kuratowski theorem states that all uncountable Polish spaces are Borel isomorphic.  We begin with a (weak) version of the Cantor embedding theorem, which gives conditions under which ${}^\kappa2$ embeds into a given strong $\kappa$-Choquet space.

Our proof of the Cantor embedding theorem will be the first to illustrate the method by which we eliminate the use of a metric from classical arguments.  When a sequence of shrinking balls is used, we use our assumption that the space has weight $\leq\kappa$.  When completeness is used to find a point in the intersection of a family of closed sets, we use the Choquet property instead.  For a comparison with the classical argument, see for instance Theorem~1.3.6 of \cite{gao}.

Our result differs from the classical one in that we need to assume the given space $X$ is $\kappa$-perfect.  Here, we say that $x\in X$ is \emph{$\kappa$-isolated} if $\{x\}$ can be written as the intersection of fewer than $\kappa$ many open sets, and that $X$ is \emph{$\kappa$-perfect} if $X$ has no $\kappa$-isolated points.  It is easy to see that if $X$ has weight $\kappa$ (and as always $\kappa^{<\kappa}=\kappa$) then $X$ contains at most $\kappa$ many $\kappa$-isolated points.  (Indeed, there are only $\kappa$ many intersections of basic open sets of length $<\kappa$.)  The hypothesis that $X$ is $\kappa$-perfect in Proposition~\ref{prop:injection} is needed because the Choquet property is not necessarily inherited by the set of $\kappa$-nonisolated points. (For instance, let $T_0$ denote the subtree of ${}^{<\kappa}2$ consisting of just those sequences with finitely many $0$'s, and let $T=T_0$ together with a branch of length $\kappa$ added on top of every $\omega$-path in $T_0$.  Then $[T]$ is strong $\kappa$-Choquet, but its perfect kernel is $[T_0]$ is not.)

\begin{prop}[cf.\ {\protect\cite[Theorem~1.3.6]{gao}}]
  \label{prop:injection}
  If $X$ is a nonempty $\kappa$-perfect $\kappa$-Choquet space with weight $\leq\kappa$, then there is a continuous injection from $2^\kappa$ into $X$.
\end{prop}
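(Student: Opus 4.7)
The plan is to construct a Cantor scheme of basic open sets $(U_s)_{s\in {}^{<\kappa}2}$ in $X$ such that (i) along every branch $b\in {}^\kappa 2$, the sequence $(U_{b\restriction \alpha})_{\alpha<\kappa}$ is a valid play of the $\kappa$-Choquet game in which player~II follows a fixed winning strategy $\sigma$; (ii) $\overline{U_{s^\smallfrown 0}}\cap\overline{U_{s^\smallfrown 1}}=\emptyset$ at every split; and (iii) along every branch $b\in{}^\kappa 2$, the sets $\{U_{b\restriction\alpha}:\alpha<\kappa\}$ form a neighborhood basis at every point of their intersection. Granted such a scheme, I will define $\pi(b)$ to be the unique point of $\bigcap_{\alpha<\kappa} U_{b\restriction\alpha}$, which is nonempty by (i) and singleton by (iii) together with Hausdorffness. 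Then $\pi$ is injective by (ii) and continuous by (iii).

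By Lemma~\ref{strategy versus basic strategy} I may assume $\sigma$ plays basic open sets. I fix an enumeration $(B_\gamma)_{\gamma<\kappa}$ of a basis of $X$ and, using $\kappa^{<\kappa}=\kappa$, a bookkeeping bijection $\phi\colon\kappa\to{}^{<\kappa}2\times\kappa$ satisfying $\dom(t)<\alpha$ whenever $\phi(\alpha)=(t,\gamma)$. I build the scheme by recursion on $\alpha<\kappa$, with the following bookkeeping requirement: if $\phi(\alpha)=(t,\gamma)$, then for every $s\in{}^\alpha 2$ extending $t$, either $U_s\subset B_\gamma$ or $U_s\cap B_\gamma=\emptyset$. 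This requirement delivers condition (iii), since for each $b\in{}^\kappa 2$ and each $\gamma<\kappa$ the pair $(b\restriction\delta,\gamma)$ appears in the range of $\phi$ for some $\delta$.

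At a successor stage $\alpha=\beta+1$, for each $s\in{}^\beta 2$ the strategy $\sigma$ produces a basic open response $V_s\subset U_s$. Using $\kappa$-perfectness to extract two distinct points of $V_s$ and regularity to separate them, I select disjoint basic opens $U_{s^\smallfrown 0},U_{s^\smallfrown 1}$ with disjoint closures contained in $V_s$. To realize the bookkeeping for $(t,\gamma)=\phi(\alpha)$ when $t\subset s$, I split into cases on the position of $V_s$ relative to $B_\gamma$: (A) $V_s\cap B_\gamma=\emptyset$; (B) $V_s\subset B_\gamma$; (C) both $V_s\cap B_\gamma$ and $V_s\smallsetminus\overline{B_\gamma}$ are nonempty open; or (D) $V_s\cap B_\gamma$ is a proper dense open subset of $V_s$. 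In (A) and (B) any two disjoint basic opens in $V_s$ suffice. In (C) I place $U_{s^\smallfrown 0}$ inside $V_s\cap B_\gamma$ and $U_{s^\smallfrown 1}$ inside $V_s\smallsetminus\overline{B_\gamma}$. In (D) I place both inside the dense set $V_s\cap B_\gamma$, which by $\kappa$-perfectness still contains at least two points. At a limit stage $\alpha=\lambda$, for each $s\in{}^\lambda 2$ the intersection $\bigcap_{\beta<\lambda}U_{s\restriction\beta}$ is nonempty because $\sigma$ is winning, and I take $U_s$ of the form $W\cap\bigcap_{\beta<\lambda}U_{s\restriction\beta}$ with $W$ a basic open set meeting this intersection, shrinking $W$ as needed for the bookkeeping by the same case analysis.

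The main obstacle is the case analysis at the successor step: one must check that in every possible configuration of $V_s$ relative to $B_\gamma$ there is enough room for two disjoint basic opens with disjoint closures, each resolving $B_\gamma$. This is where both $\kappa$-perfectness---to guarantee several points inside every nonempty open set---and regularity---to guarantee disjoint closures---are essential.
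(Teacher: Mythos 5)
Your proof is correct and follows essentially the same route as the paper's: a Cantor scheme $(U_s)_{s\in{}^{<\kappa}2}$ whose branches are runs of the game in which player~II follows $\sigma$, with $\kappa$-perfectness and regularity supplying the disjoint-closure splits at successors, intersections at limits, and a bookkeeping condition over a basis forcing each branch-intersection to be a singleton and the resulting map to be continuous. The only (harmless) difference is that the paper sidesteps your four-way case analysis by enumerating pairs of basic sets $(B_\alpha,C_\alpha)$ with $\overline{B_\alpha}\subset C_\alpha$ and demanding only the weaker dichotomy that $U_s\subset C_\alpha$ or $U_s\cap\overline{B_\alpha}=\emptyset$, which is always achievable in a single shrinking step.
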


\begin{proof}
  Let $\sigma$ be a winning strategy for player~II in the Choquet game in $X$.  Let $(B_\alpha,C_\alpha)$ enumerate the set of pairs of basic open sets such that $\overline{B_\alpha}\subset C_\alpha$.  We construct families of subsets $U_s$ and $V_s$ for $s\in {}^{\mathord{<}\kappa}2$ (with $U_\emptyset=X$) satisfying:
\begin{enumerate}
\item $U_{s^\smallfrown0}$ and $U_{s^\smallfrown1}$ are relatively open subsets of $V_s$ such that $\overline{U_{s^\smallfrown0}}$ and $\overline{U_{s^\smallfrown1}}$ are disjoint;
\item\label{emb:welldef} if $\dom(s)=\alpha$ then $U_s$ is either contained in $C_\alpha$ or disjoint from $\overline{B_\alpha}$;
\item for $s$ of limit length $\lambda$, $U_s=\bigcap_{\alpha<\lambda}U_{s\restriction\alpha}$; and
\item\label{emb:choquet} for each $s$, $V_s=\sigma(U_{s\restriction0},U_{s\restriction1},\cdots,U_s)$.
\end{enumerate}
The construction is possible because the space is $\kappa$-perfect.
  
Now, for $x\in{}^\kappa2$, the set $\bigcap_{\alpha<\kappa}U_{x\restriction\alpha}$ is nonempty by \eqref{emb:choquet} and it is a singleton by \eqref{emb:welldef}.  Hence, we may let $f(x)=$ this unique element, and it is clear that $f$ is one-to-one.  To see that it is continuous, suppose that $f(x)\in O$ and $O$ is open.  Choose a pair $(B_\alpha,C_\alpha)$ such that $f(x)\in B_\alpha$ and $C_\alpha\subset O$.  Then since $f(x)\in U_{x\restriction\alpha+1}$ we must have had $U_{x\restriction\alpha+1}\subset C_\alpha$.  Hence for all $y\in{}^\kappa2$ with $y\restriction\alpha+1=x\restriction\alpha+1$ we have $f(y)\in C_\alpha\subset O$.
\end{proof}

Since there is a continuous injection from $\kappa^\kappa$ into $2^{\kappa}$, we can even get a continuous injection from $\kappa^\kappa$ into $X$.

\begin{question}
  If $X$ is a $\kappa$-perfect $\kappa$-Choquet space, then is there a continuous injection from $2^\kappa$ into $X$ with closed image?
\end{question} 

The next two results use similar arguments to represent a given strong $\kappa$-Choquet space as a surjective image.

\begin{thm}[cf.\ \protect{\cite[Theorem~1.3.7]{gao}}]
  \label{thm:bij}
  If $X$ is strong $\kappa$-Choquet with weight $\leq\kappa$, then there is a subtree $T\subset{}^{<\kappa}\kappa$ without end nodes and a continuous bijection $f\colon [T]\rightarrow X$.
\end{thm}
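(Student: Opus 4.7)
The plan is to follow the method of \cite[Theorem~1.3.7]{gao}, adapting the classical Luzin--Kuratowski decomposition to the transfinite setting by replacing metric completeness with player~II's Choquet winning strategy. I would construct a subtree $T\subseteq{}^{<\kappa}\kappa$ and, for each $s\in T$, a basic open set $U_s$ (representing player~I's move in a Choquet run) and a partition piece $A_s\subseteq U_s$, and define $f\colon[T]\to X$ by sending $b$ to the unique element of $\bigcap_{\alpha<\kappa}A_{b\restriction\alpha}$.

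Using Lemma~\ref{strategy versus basic strategy}, I would first fix a winning strategy $\sigma$ for player~II which plays basic open sets, and an enumeration $\{B_\alpha:\alpha<\kappa\}$ of a basis of $X$ in which each basic open appears cofinally. At a successor step $\alpha\to\alpha+1$, for $s\in T$ at level $\alpha$ with the Choquet response $V_s$ already defined, I would use regularity to enumerate all basic opens $\{C_\delta:\delta<\mu_s\le\kappa\}$ with $\overline{C_\delta}\subseteq V_s$, listed so that those $C_\delta\subseteq B_\alpha$ come first. For each $\delta$, assign $U_{s^\smallfrown\delta}=C_\delta$ with a point $x_{s^\smallfrown\delta}\in A_s\cap C_\delta$ chosen to avoid the closures of earlier siblings when possible, and set
\[A_{s^\smallfrown\delta}=(A_s\cap C_\delta)\setminus\bigcup_{\delta'<\delta}C_{\delta'},\]
including $s^\smallfrown\delta$ in $T$ if and only if $A_{s^\smallfrown\delta}\ne\emptyset$. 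At a limit stage $\lambda<\kappa$, for each $s\in{}^\lambda\kappa$ whose proper initial segments are all in $T$, set $A_s=\bigcap_{\beta<\lambda}A_{s\restriction\beta}$ and include $s\in T$ iff $A_s\ne\emptyset$; since $\sigma$ is winning, $\bigcap_{\beta<\lambda}V_{s\restriction\beta}\ne\emptyset$, so player~I can continue the play by taking the next move from inside this intersection.

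Having built the scheme, the partition property at successors together with the limit clause recursively determine, for each $x\in X$, a unique branch $b_x\in[T]$ with $x\in A_{b_x\restriction\alpha}$ at every $\alpha$, yielding $f(b_x)=x$; this gives surjectivity, and injectivity follows because the partition forces $b_x$ to be uniquely determined by $x$. Continuity follows from the cofinal enumeration: given any basic $B\ni f(b)$, at a stage $\alpha$ with $B_\alpha=B$ the ordering of children forces $U_{b\restriction\alpha+1}\subseteq B$, and hence every $b'\in[T]$ agreeing with $b$ up to level $\alpha+1$ satisfies $f(b')\in A_{b'\restriction\alpha+1}\subseteq B$.

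The main obstacle is to verify that $\bigcap_{\alpha<\kappa}A_{b\restriction\alpha}$ is both nonempty and a singleton for every $b\in[T]$, so that $f$ is defined on all of $[T]$ and $T$ has no end nodes. Singleton-ness follows from Hausdorffness together with the basis refinement by the enumeration $\{B_\alpha\}$. Nonemptiness is the delicate point: my plan is to maintain the additional invariant $V_s\subseteq A_s$ throughout the construction, arranged by choosing $x_{s^\smallfrown\delta}\in A_s\cap C_\delta\setminus\bigcup_{\delta'<\delta}\overline{C_{\delta'}}$ and, where necessary, shrinking the response to $V_{s^\smallfrown\delta}\setminus\bigcup_{\delta'<\delta}\overline{C_{\delta'}}$, which remains a valid player~II move containing $x_{s^\smallfrown\delta}$. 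Granted this invariant, $\bigcap_\alpha A_{b\restriction\alpha}\supseteq\bigcap_\alpha V_{b\restriction\alpha}\ne\emptyset$ by the winning condition of $\sigma$, which completes the argument.
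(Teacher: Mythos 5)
Your overall architecture --- a tree of Choquet runs together with the Luzin-style disjointification $A_{s^\smallfrown\delta}=(A_s\cap C_\delta)\smallsetminus\bigcup_{\delta'<\delta}C_{\delta'}$ --- is the same as the paper's, and your continuity and injectivity arguments are essentially fine. The gap is exactly at the step you flag as delicate: the invariant $V_s\subseteq A_s$ cannot be maintained. First, a legal move for player~II must be \emph{open}, but $V_{s^\smallfrown\delta}\smallsetminus\bigcup_{\delta'<\delta}\overline{C_{\delta'}}$ is an open set intersected with $\delta$ many open sets, which need not be open once $\delta\geq\omega$ --- and your branching is $\kappa$-ary, so almost all $\delta$ are infinite. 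The theorem must cover spaces that are not $\kappa$-additive, such as $({}^\kappa\kappa,\mathsf{lex})$, so you cannot treat such intersections as open. Second, and more fundamentally, your partition pieces must satisfy $A_s\subseteq V_s$ (points of $A_s\smallsetminus V_s$ lie in no child, which destroys surjectivity and leaves end nodes in $T$), so combined with $V_s\subseteq A_s$ you would need $A_s=V_s$; but $A_{s^\smallfrown\delta}$ is an open set with a union of open sets removed, and such a set can be nonempty with empty interior (already $C_1\smallsetminus C_0$ fails to be open in a connected space like $({}^\kappa2,\mathsf{lex})$), so no nonempty open set fits inside it. The invariant therefore fails at the very first nontrivial sibling, and with it your proof that $\bigcap_{\alpha<\kappa}A_{b\restriction\alpha}\neq\emptyset$.

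The paper never asks the Choquet responses to sit inside the partition pieces. It defines $f(b)$ as the unique point of $\bigcap_{\alpha<\kappa}U_{b\restriction\alpha}$, which is nonempty purely because $\sigma$ is winning, and then proves \emph{separately} that this point lies in every partition piece: the witness points are chosen inside the partition pieces, i.e.\ $x_{b\restriction\beta}\in U'_{b\restriction\beta}$, hence outside every lexicographically earlier open set $U_t$ at every earlier level; since the sets $U_{b\restriction\alpha}$ shrink into any neighborhood of $f(b)$ (your continuity argument already gives this), we have $x_{b\restriction\beta}\to f(b)$, and since each $U_t$ is open, the limit $f(b)$ also avoids $U_t$. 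Thus $\bigcap_\alpha U_{b\restriction\alpha}=\bigcap_\alpha A_{b\restriction\alpha}$ after all. You can repair your proof by choosing each $x_{s^\smallfrown\delta}$ in $A_{s^\smallfrown\delta}$ itself (nonempty by the definition of $T$; avoiding the earlier siblings, not their closures) and replacing the invariant with this convergence argument.
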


\begin{proof}
  As before, let $\sigma$ be a winning strategy for player~II in the strong $\kappa$-Choquet game in $X$, and let $(B_{\alpha},C_{\alpha})_{\alpha<\kappa}$ enumerate the pairs of basic open sets in $X$ such that $\overline{B_{\alpha}}\subset C_{\alpha}$.  This time we construct a subtree $T\subset {}^{<\kappa}\kappa$ with no end nodes, subsets $U_s,U_s'\subset X$ for $s\in T$, and elements $x_s\in X$ for $s\in T$ and $\dom(s)$ a successor, such that the following properties hold:
\begin{enumerate} 
\item\label{bij:choquet} for each $b\in[T]$ the sequence $(U_{b\restriction0},x_{b\restriction1},U_{b\restriction1},x_{b\restriction2}\ldots)$ is a valid sequence of plays for player~I in a run of the strong $\kappa$-Choquet game in which player~II plays by $\sigma$;
\item\label{bij:welldef} if $\dom(s)=\alpha$ then $U_s$ is either contained in $C_\alpha$ or disjoint from $\overline{B_\alpha}$;
\item\label{bij:refinement} for $s\subset t\in T$ we have $U_t'\subset U_s'\subset U_s$; and
\item\label{bij:partition} for each $\alpha$, $\set{U_s'\mid s\in   T,\dom(s)=\alpha}$ is a partition of $X$.
\end{enumerate} 

  To carry out the construction, we begin by letting $U_\emptyset=U'_\emptyset=X$. Next, if $s\in T$ with $\dom(s)=\alpha$ and $U_s,U_s',x_s$ have been defined, we define the immediate successors $U_{s^\smallfrown\beta}$ as follows. For $x\in U_s'$, first let
\[W_x=\begin{cases}\sigma(U_{s\restriction0},x_{s\restriction1},U_{s\restriction1},x_{s\restriction2},\ldots,U_s,x)\ \cap C_\alpha &\mathrm{if\ }x\in\overline{B_\alpha}; \\
\sigma(U_{s\restriction0},x_{s\restriction1},U_{s\restriction1},x_{s\restriction2},\ldots,U_s,x) \smallsetminus\overline{B_\alpha}&\text{otherwise.}\end{cases}
\]
Note that the family of all $W_x$ for $x\in U_s'$ covers $U_s'$. We let $U_{s^\smallfrown\beta}$ enumerate a minimal subcover and $x_{s^\smallfrown\beta}$ the corresponding $x$'s.  We also define
\[U_{s^\smallfrown\beta}'=U_{s^\smallfrown\beta}\smallsetminus\bigcup\set{U_{s^\smallfrown\gamma}\mid\gamma<\beta}\;.
\]

  If $s$ has limit length $\lambda$ and $s\restriction\alpha\in T$ for all $\alpha<\lambda$, then we define $U_s=\bigcap_{\alpha<\lambda}U_{s\restriction\alpha}$ and also $U_s'=U_s\smallsetminus\bigcup\set{U_t\mid t<_{\mathsf{lex}}s}$ (as in the successor case).  Finally, we put $s\in T$ if and only if $U_s'\neq\emptyset$.  It is straightforward to verify that if $s\in T$ has limit length $\lambda$, then $U_s'=\bigcap_{t\subsetneq s}U_t'$, and furthermore that the requirements \eqref{bij:refinement} and \eqref{bij:partition} are fulfilled. This completes the construction.

  Now, for all branches $b\in[T]$ the set $\bigcap_{\alpha<\kappa}U_{b\restriction\alpha}$ is nonempty by \eqref{bij:choquet} and it is a singleton by \eqref{bij:welldef}. Hence, we may let $f(b)$ denote this unique element.  To see that $f$ is continuous, suppose that $f(b)\in O$ and $O$ is open. Choose a pair $(B_\alpha,C_\alpha)$ such that $f(b)\in B_\alpha$ and $C_\alpha\subset O$.  Then since $f(b)\in U_{b\restriction\alpha+1}$, we must have had $U_{b\restriction\alpha+1}\subset C_\alpha$.  Hence for all $c\in[T]$ with $c\restriction\alpha+1=b\restriction\alpha+1$ we have $f(c)\in C_\alpha\subset O$.
  
  To show that $f$ is injective, we first claim that $\bigcap_{\alpha<\kappa}
  U_{b\upharpoonright\alpha}=\bigcap_{\alpha<\kappa}U_{b\upharpoonright\alpha}'$ for every branch $b\in[T]$. Otherwise $f(b)\in U_{b\upharpoonright\alpha}\smallsetminus U_{b\upharpoonright\alpha}'$ for some $\alpha<\kappa$ and we can find some $s<_{\mathsf{lex}}b\upharpoonright\alpha$ with $f(b)\in U_s$. Since $x_{b\upharpoonright\beta}\in U_{b\upharpoonright\beta}'$ and thus $x_{b\upharpoonright\beta}\notin U_s$ for all $\beta\geq\alpha$, and since $U_s$ is open, we have $f(b)=\lim_{\alpha<\kappa}x_{b\upharpoonright\alpha}\notin U_s$. Thus $f$ is injective by the requirement \eqref{surj:partition}.

  To show that $f\colon[T]\rightarrow X$ is surjective, suppose that $x\in X$ and find the unique $s_{\alpha}\in {}^{\alpha}2$ with $x\in U_{s_\alpha}'$ for each $\alpha<\kappa$. Then $s_{\alpha}\subset s_{\beta}$ for all $\alpha<\beta<\kappa$. Hence $f(b)=x$ for the unique branch $b$ through all $s_{\alpha}$.
\end{proof} 

\begin{cor}\label{standard Borel}
  If $X$ is strong $\kappa$-Choquet and of weight $\leq\kappa$, then $X$ is standard Borel---that is, $X$ is in $\kappa$-Borel bijection with a $\kappa$-Borel subset of ${}^\kappa\kappa$.
\end{cor}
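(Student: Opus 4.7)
The plan is to upgrade the continuous bijection $f\colon [T]\to X$ produced by Theorem~\ref{thm:bij} into a full $\kappa$-Borel isomorphism. Since $T\subset {}^{<\kappa}\kappa$ is a pruned subtree, $[T]$ is closed in ${}^\kappa\kappa$, hence a $\kappa$-Borel subset of ${}^\kappa\kappa$. So once $f$ is shown to be a $\kappa$-Borel isomorphism, the conclusion follows.

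Because $f$ is continuous it is automatically $\kappa$-Borel measurable, so the real task is to show that $f^{-1}$ is $\kappa$-Borel measurable, or equivalently that $f$ sends $\kappa$-Borel sets of $[T]$ to $\kappa$-Borel sets of $X$. Since $|T|\leq|{}^{<\kappa}\kappa|=\kappa$, the topology on $[T]$ has weight $\leq\kappa$, and every open subset of $[T]$ is a union of at most $\kappa$ basic open sets $N_s\cap[T]$. Moreover $f$ is a bijection, so it commutes with complements and with arbitrary unions on subsets. Thus it suffices to verify, for each $s\in T$, that the image $f(N_s\cap[T])$ is a $\kappa$-Borel subset of $X$.

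The key observation is that $f(N_s\cap[T])=U_s'$, where $U_s'$ is the auxiliary set from the construction in the proof of Theorem~\ref{thm:bij}. Indeed, the surjectivity argument there shows that for every $x\in X$ there is, at each level $\alpha$, a unique $s_\alpha$ of length $\alpha$ with $x\in U_{s_\alpha}'$; the $s_\alpha$ are nested and $f$ is defined so that $f(b)=x$ for the unique branch $b$ extending all of them. By uniqueness of this branch, $s_\alpha=b\restriction\alpha$, and hence $f(b)\in U_s'$ if and only if $s\subset b$, i.e., if and only if $b\in N_s\cap[T]$. This identification is essentially forced by the partition property~\eqref{bij:partition}.

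It remains to show that each $U_s'$ is $\kappa$-Borel in $X$. A straightforward induction on $\dom(s)$ shows that every $U_s$ is $\kappa$-Borel: at successor stages $U_{s^\smallfrown\beta}$ is a relatively open subset of the previously constructed $U_s$ intersected with either a basic open set $C_\alpha$ or with the complement of the closed set $\overline{B_\alpha}$, hence is $\kappa$-Borel; at limit stages $U_s$ is an intersection of fewer than $\kappa$ many previously constructed $\kappa$-Borel sets. Then $U_s'$ arises from $U_s$ by removing a union of at most $\kappa$ many siblings (or lex-earlier $U_t$ at the same level), and so is again $\kappa$-Borel. The main obstacle is only the bookkeeping verification that $f(N_s\cap[T])=U_s'$; once this equality is in hand the corollary is immediate.
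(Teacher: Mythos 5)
Your proof is correct and follows essentially the same route as the paper: the paper leaves Corollary~\ref{standard Borel} as an immediate consequence of Theorem~\ref{thm:bij}, and the justification it does record (in the proof of Corollary~\ref{cor:iso}) is precisely your argument, namely that the continuous bijection $f\colon[T]\to X$ sends the basic open set determined by $t\in T$ to the $\kappa$-Borel set $U_t'=U_t\smallsetminus\bigl(\bigcup_{s<_{\mathsf{lex}}t}U_s\bigr)$, using the identity $\bigcap_{\alpha<\kappa}U_{b\restriction\alpha}=\bigcap_{\alpha<\kappa}U_{b\restriction\alpha}'$ from the injectivity/surjectivity arguments. Your additional observation that this extra step is genuinely needed (continuity of a bijection alone does not suffice for $\kappa>\omega$) and the induction showing each $U_s$ and $U_s'$ is $\kappa$-Borel are exactly the right bookkeeping.
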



We next show that the $\kappa$-Baire space is surjectively universal among strong $\kappa$-Choquet spaces.

\begin{thm}
  \label{thm:surj}
  If $X$ is nonempty and strong $\kappa$-Choquet with weight $\leq\kappa$, then there is a continuous surjection $f\colon{}^{\kappa}\kappa \rightarrow X$.  If $X$ is additionally $\kappa$-additive, then $f$ can be chosen open as well.
\end{thm}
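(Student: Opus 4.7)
The plan is to reduce both parts to the scheme construction underlying Theorem~\ref{thm:bij}.

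For the continuous surjection, I would invoke Theorem~\ref{thm:bij} to obtain a subtree $T\subset{}^{<\kappa}\kappa$ without end nodes and a continuous bijection $g\colon[T]\to X$. The branch space $[T]$ is closed in ${}^\kappa\kappa$ (if $b\restriction\alpha\in T$ for every $\alpha<\kappa$ then $b\in[T]$ by definition), and I would construct a continuous retraction $r\colon{}^\kappa\kappa\to[T]$ coordinate by coordinate: given $x\in{}^\kappa\kappa$ and $r(x)\restriction\alpha\in T$ already defined, set $r(x)(\alpha)=x(\alpha)$ if this choice extends to a branch in $[T]$, and otherwise let $r(x)(\alpha)$ be the least $\beta$ such that $r(x)\restriction\alpha\,{}^\smallfrown\beta$ extends to a branch in $[T]$; at limit $\lambda<\kappa$ take the union, which again lies in the closure of $[T]$. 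The map $r$ is continuous because agreement of $x$ and $y$ on $\alpha$ implies agreement of $r(x)$ and $r(y)$ on $\alpha$, and the composition $f=g\circ r$ is the desired continuous surjection.

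For the open surjection when $X$ is $\kappa$-additive, I would carry out a scheme construction directly on the full tree ${}^{<\kappa}\kappa$. Fix a winning strategy $\sigma$ for player~II and an enumeration $(B_\alpha,C_\alpha)_{\alpha<\kappa}$ of basic open pairs with $\overline{B_\alpha}\subset C_\alpha$. For each $s\in{}^{<\kappa}\kappa$ I would choose an open set $U_s$ and a point $x_s\in U_s$ so that the pairs $(U_{s\restriction\alpha},x_{s\restriction\alpha})$ along each branch form a valid player-I play against $\sigma$, the set $U_s$ is contained in $C_{\dom(s)}$ or disjoint from $\overline{B_{\dom(s)}}$, and the family $\{U_{s^\smallfrown\beta}:\beta<\kappa\}$ covers $\sigma(\ldots,U_s,x_s)$; at a limit stage $\lambda<\kappa$, set $U_s=\bigcap_{\alpha<\lambda}U_{s\restriction\alpha}$, which remains open by the $\kappa$-additivity of~$X$. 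Defining $f(b)$ as the unique point of $\bigcap_{\alpha<\kappa}U_{b\restriction\alpha}$ yields a continuous map, and the cover property combined with the openness of each $\sigma$-response makes $f(N_s)$ open for every~$s$, so $f$ is open.

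The main obstacle will be making $f$ surject onto all of $X$ rather than onto a proper open subset, since player~I's successor moves are constrained to lie within player~II's previous response. I would address this by treating $U_\emptyset=X$ as an implicit initial player-II move, so that at level~$1$ the family $\{U_{(\beta)}:\beta<\kappa\}$ is free to be any open cover of $X$ by basic open sets satisfying the $(B_0,C_0)$ constraint; the construction then propagates downward through~$\sigma$. The essential use of $\kappa$-additivity occurs at limit stages: without it, the intersection $\bigcap_{\alpha<\lambda}U_{s\restriction\alpha}$ need not be open, and the $f$-image of a basic open in ${}^\kappa\kappa$ would likewise fail to be open.
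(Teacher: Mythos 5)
The second half of your outline is essentially the paper's proof, but the first half has a genuine gap. The coordinate-by-coordinate retraction $r\colon{}^\kappa\kappa\to[T]$ breaks down at limit stages: for a limit $\lambda<\kappa$, each initial segment $r(x)\restriction\alpha$ with $\alpha<\lambda$ extends to \emph{some} branch of $T$, but these witnessing branches may all be different, and the union $r(x)\restriction\lambda$ need not lie in $T$ at all. The tree produced by Theorem~\ref{thm:bij} is not ${<}\kappa$-closed in general: a node $s$ of limit length belongs to $T$ only if $U_s'=\bigcap_{t\subsetneq s}U_t'\neq\emptyset$, and this intersection can be empty even when every proper initial segment of $s$ is in $T$. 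This is exactly where $\kappa>\omega$ departs from the classical case: for uncountable $\kappa$ a closed subset of ${}^\kappa\kappa$, even a pruned branch space, need not be a retract of (or even a continuous image of) ${}^\kappa\kappa$, so no retraction argument of this shape can succeed without additional structure on $T$.

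The good news is that your second construction, run on all of ${}^{<\kappa}\kappa$ with repetitions allowed in the subcovers, already proves the first claim with no appeal to $\kappa$-additivity: at a limit $\lambda$ the set $\bigcap_{\alpha<\lambda}U_{s\restriction\alpha}$ is a legal move for player~I whether or not it is open, and openness is needed only to make $f(N_s)$ open. This is precisely the paper's proof. One correction to your covering condition, though: the children $\{U_{s^\smallfrown\beta}\}_{\beta<\kappa}$ must cover $U_s$ itself, not merely $\sigma(\ldots,U_s,x_s)$. This is arranged by letting the point vary: for each $x\in U_s$ one takes a trimmed copy of $\sigma$'s response to the play ending with $(U_s,x)$, which contains $x$, so these responses cover $U_s$, and one then extracts a subcover of size $\kappa$. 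If the children only cover the single response $\sigma(\ldots,U_s,x_s)$, then $f(N_s)\subseteq\bigcup_{\beta<\kappa}U_{s^\smallfrown\beta}$ omits the points of $U_s\smallsetminus\sigma(\ldots,U_s,x_s)$; your root-level fix repairs this only at level one, and surjectivity fails from level two onward.
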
 

\begin{proof}
  The construction of a continuous surjection $f\colon{}^{\kappa}\kappa\rightarrow X$ is similar to the previous proof and we will use the same notation. This time we construct subsets $U_s\subset X$ for $s\in {}^{\kappa}\kappa$, and elements $x_s\in X$ for $s\in {}^{\kappa}\kappa$ successor satisfying:
\begin{enumerate} 
\item\label{surj:choquet} for each $b\in{}^{\kappa}\kappa$ the sequence $(U_{b\restriction0},x_{b\restriction1},U_{b\restriction1},x_{b\restriction2}\ldots)$ is a valid sequence of plays for I in a run of the strong $\kappa$-Choquet game in which II plays by $\sigma$;
\item\label{surj:welldef} if $\dom(s)=\alpha$ then $U_s$ is either contained in $C_\alpha$ or disjoint from $\overline{B_\alpha}$;
\item\label{surj:refinement} for $s\subset t\in {}^{<\kappa}\kappa$ we have $U_t\subset U_s$; and
\item\label{surj:partition} for each $s\in {}^{<\kappa}\kappa$, $\set{U_{s^\smallfrown\beta}\mid\beta<\kappa}$ covers $U_s$.
\end{enumerate} 
  

If $s\in T$ and $U_s,x_s$ have been defined, we define the immediate successors $U_{s^\smallfrown\alpha}$ as follows. For $x\in U_s$, define $W_x$ as in the previous proof, so that the set of $W_x$ for $x\in U_s$ covers $U_s$.  This time we let $U_{s^\smallfrown\beta}$ enumerate a subcover of size $\kappa$, with repetitions allowed, and $x_{s^\smallfrown\beta}$ be the corresponding $x$'s.  For $s$ of limit length $\lambda$ we define $U_s=\bigcap_{\alpha<\lambda}U_{s\restriction\alpha}$.

  We may now define $f$ as in the previous proof, and it will be continuous and surjective by the same arguments.
\end{proof} 

The above results have a number of consequences. The first provides the upper bound of $2^\kappa$ on the number of strong $\kappa$-Choquet spaces up to homeomorphism.  The second is the promised Kuratowski-like result which gives conditions under which any two strong $\kappa$-Choquet spaces of size $>\kappa$ are isomorphic by a $\kappa$-Borel function.

\begin{cor}\label{number of spaces}
  There are exactly $2^{\kappa}$ many homeomorphism types of strong $\kappa$-Choquet space of weight $\leq\kappa$.
\end{cor}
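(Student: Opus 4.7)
The lower bound of $2^\kappa$ is already supplied by Proposition~\ref{many homeomorphism types}, so the task is the matching upper bound. The plan is to show that each strong $\kappa$-Choquet space $X$ of weight $\leq\kappa$ is determined up to homeomorphism by a single piece of data of size at most $2^\kappa$, namely a $\kappa$-sequence of open subsets of ${}^\kappa\kappa$ arising from Theorem~\ref{thm:surj}.

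Concretely, I would fix such an $X$, choose a continuous surjection $f\colon{}^\kappa\kappa\to X$ using Theorem~\ref{thm:surj}, and a basis $(B_\alpha)_{\alpha<\kappa}$ of $X$ of size $\leq\kappa$. The pullbacks $U_\alpha=f^{-1}(B_\alpha)$ are open subsets of ${}^\kappa\kappa$. The key claim is that the sequence $(U_\alpha)_{\alpha<\kappa}$ recovers $X$ up to homeomorphism: since $X$ is Hausdorff and hence $T_0$, any two distinct points of $X$ are separated by some basic open set, so $f(a)=f(b)$ if and only if $a\in U_\alpha\Leftrightarrow b\in U_\alpha$ for every $\alpha<\kappa$. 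Thus both the equivalence relation that collapses ${}^\kappa\kappa$ onto $X$ and the basis of $X$ (as the image of $(U_\alpha)$ under the quotient map) are reconstructible from the sequence alone, and any two spaces yielding the same sequence must be homeomorphic.

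Next I would count. Since $|{}^{<\kappa}\kappa|=\kappa^{<\kappa}=\kappa$, the space ${}^\kappa\kappa$ has exactly $\kappa$ basic open sets, so the total number of its open subsets is at most $2^\kappa$. Therefore the number of length-$\kappa$ sequences of open subsets is at most $(2^\kappa)^\kappa=2^\kappa$, which in view of the preceding paragraph bounds the number of homeomorphism types by $2^\kappa$. Combining this with Proposition~\ref{many homeomorphism types} yields equality.

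I do not anticipate a significant obstacle. The only subtle point is verifying that the sequence $(U_\alpha)$ really does determine the topology on $X$ and not just its underlying set; this uses only the $T_0$ separation provided by Hausdorffness, and does not require the strong $\kappa$-Choquet property at this step (which is used only to invoke Theorem~\ref{thm:surj} in the first place).
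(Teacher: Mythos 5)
Your proposal is correct and follows essentially the same route as the paper: the lower bound comes from Proposition~\ref{many homeomorphism types}, and the upper bound from pulling back a basis along the continuous surjection of Theorem~\ref{thm:surj} and counting the resulting open subsets of ${}^\kappa\kappa$. The only (harmless) difference is that the paper counts pairs $(f,\mathcal U)$ while you observe that the family of pullbacks alone already determines $X$, since the $T_0$ separation recovers the fibers of $f$.
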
 

\begin{proof}
  We have already shown in Proposition \ref{many homeomorphism types} that there are at least $2^\kappa$ many such spaces.  To see that there are at most $2^\kappa$, by Theorem~\ref{thm:surj} there is a continuous surjection $f\colon{}^{\kappa}\kappa\rightarrow X$.  Let $(B_{\alpha})_{\alpha<\kappa}$ denote a base for $X$ and let $\mathcal U=\set{f^{-1}(B_\alpha)\mid\alpha<\kappa}$. Then the topology of $X$ is determined up to homeomorphism by $f$ and $\mathcal U$.

  Now, since a continuous function is determined by its restriction to a dense subset, there are at most $2^{\kappa}$ many continuous maps with domain ${}^{\kappa}\kappa$.  Similarly, there are at most $2^{\kappa}$ many possible open sets $\mathcal U$.  Thus there at most $2^{\kappa}$ many possible pairs $(f,\mathcal U)$ and so at most $2^\kappa$ many possible topologies on $X$.
\end{proof} 

\begin{cor}\label{cor:iso}
  \begin{enumerate}
  \item Any two $\kappa$-perfect, strong $\kappa$-Choquet spaces of weight $\leq\kappa$ and of size $>\kappa$ are $\kappa$-Borel isomorphic.
  \item Suppose that any 
subtree $T\subset{}^{<\kappa}\kappa$ with $\abs{[T]}>\kappa$ has a perfect binary subtree.  Then any two strong $\kappa$-Choquet spaces of weight $\leq\kappa$ and size $>\kappa$ are $\kappa$-Borel isomorphic.
  \end{enumerate}
\end{cor}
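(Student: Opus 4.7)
My plan is to show, in both parts (a) and (b), that every such $X$ is $\kappa$-Borel isomorphic to ${}^\kappa 2$; the statement then follows by composition. I would achieve this by constructing $\kappa$-Borel bijections from $X$ onto a $\kappa$-Borel subset of ${}^\kappa 2$ and vice versa, and then applying the standard Schr\"oder--Bernstein construction. The classical argument adapts without change: given $\kappa$-Borel bijections onto $\kappa$-Borel sets in each direction, the iterates $A_0=X\smallsetminus B$ and $A_{n+1}=g(f(A_n))$ remain $\kappa$-Borel at each finite step (as preimages of $\kappa$-Borel sets under $\kappa$-Borel inverses), and $A=\bigcup_{n<\omega} A_n$ is $\kappa$-Borel since $\kappa>\omega$, yielding the desired $\kappa$-Borel bijection $h\colon X\to{}^\kappa 2$ defined by $h=f$ on $A$ and $h=g^{-1}$ on $X\smallsetminus A$.

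The embedding $X\hookrightarrow{}^\kappa 2$ is uniform across both parts. Corollary \ref{standard Borel} gives a $\kappa$-Borel bijection of $X$ onto a $\kappa$-Borel subset of ${}^\kappa\kappa$, and there is a standard continuous embedding of ${}^\kappa\kappa$ into ${}^\kappa 2$ with closed image (for instance, fix a bijection $\kappa\cong\kappa\times\kappa$ and send $x\in{}^\kappa\kappa$ to the indicator function of its graph in ${}^{\kappa\times\kappa}2\cong{}^\kappa 2$).

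For the reverse embedding ${}^\kappa 2\hookrightarrow X$, case (a) proceeds via Proposition \ref{prop:injection}. The key observation to verify is that the continuous injection $f\colon{}^\kappa 2\to X$ produced there has $\kappa$-Borel image: the disjointness of $\overline{U_{s^\smallfrown 0}}$ and $\overline{U_{s^\smallfrown 1}}$ forces $\mathrm{image}(f)=\bigcap_{\alpha<\kappa}\bigcup_{s\in{}^\alpha 2}U_s$, a $\kappa$-intersection of $\kappa$-unions of open sets, and the same disjointness shows that $f^{-1}$ is in fact continuous on the image (since $f([s])=U_s\cap\mathrm{image}(f)$ is relatively open). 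Case (b) instead uses Theorem \ref{thm:bij}: the continuous bijection $g\colon[T]\to X$ has $g^{-1}$ automatically $\kappa$-Borel by the partition property \eqref{bij:partition}. Since $|[T]|=|X|>\kappa$, the hypothesis supplies a perfect binary subtree $S\subseteq T$, and $g\restriction [S]$ is a continuous injection of $[S]\cong{}^\kappa 2$ into $X$ whose image $g([S])=(g^{-1})^{-1}([S])$ is $\kappa$-Borel because $[S]$ is closed in $[T]$.

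The main obstacle to watch for is that, as the paper emphasizes, continuous injective images in the generalized setting need not be $\kappa$-Borel, so there is no Lusin--Souslin-style black box to appeal to. The crux of the argument is therefore the explicit verification above that our two specific injective constructions yield $\kappa$-Borel images, exploiting the built-in disjointness and partition properties of Proposition \ref{prop:injection} and Theorem \ref{thm:bij}; once this is in hand, the Schr\"oder--Bernstein step is purely formal.
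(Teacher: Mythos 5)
Your proposal is correct and follows essentially the same route as the paper's proof: Corollary~\ref{standard Borel}/Theorem~\ref{thm:bij} for the embedding into ${}^\kappa2$, Proposition~\ref{prop:injection} in case (a) (resp.\ the perfect-subtree hypothesis applied to the tree of Theorem~\ref{thm:bij} in case (b)) for the reverse embedding, an explicit check that the relevant images are $\kappa$-Borel via the built-in disjointness and partition properties, and a Cantor--Schr\"oder--Bernstein argument to finish. The only (inessential) slip is your parenthetical claim that the graph-coding embedding of ${}^\kappa\kappa$ into ${}^\kappa2$ has \emph{closed} image---totality of the coded function is not a closed condition, so the image is only a $\kappa$-fold intersection of open sets (the ``$G_\delta$'' image the paper asserts)---but since your argument, like the paper's, uses only $\kappa$-Borelness of this image, nothing is affected.
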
 

\begin{proof}
  Given any strong $\kappa$-Choquet space $X$ of size $>\kappa$, we first claim that the map $[T]\to X$ constructed in the proof of Theorem~\ref{thm:bij} is a $\kappa$-Borel isomorphism. We have already said it is continuous and bijective, but it is also easy to see that it maps basic open sets to $\kappa$-Borel sets.  Indeed, as we showed in the proof of Theorem~\ref{thm:bij}, $\bigcap_{\alpha<\kappa}U_{b\upharpoonright\alpha}=\bigcap_{\alpha<\kappa}U_{b\upharpoonright\alpha}'$ for every branch $b\in[T]$, and thus the image of the basic open set given by $t\in T$ is the $\kappa$-Borel set $U_t'=U_t\smallsetminus(\bigcup_{s<_{\mathsf{lex}}t}U_s)$.

  In the case that $X$ is $\kappa$-perfect, then it is additionally straightforward to check that for the injection $f\colon{}^{\kappa}2\rightarrow X$ constructed in Proposition~\ref{prop:injection}, the image of any basic open set is $\kappa$-Borel.  Moreover, it is easy to see that ${}^{\kappa}\kappa$ is homeomorphic to a $G_{\delta}$ subset of ${}^{\kappa}2$. (Here, $G_{\delta}$ means an intersection of $\kappa$ many open sets.) Thus we can apply a Cantor--Schr\"oder--Bernstein argument to conclude that $X$ is $\kappa$-Borel isomorphic to $\kappa^\kappa$, as desired.
  
  If instead we assume the hypothesis in part (b), then ${}^{\kappa}2$ is homeomorphic to a closed subset of $[T]\subset{}^{\kappa}\kappa$.  Hence $[T]$, and therefore $X$, is $\kappa$-Borel isomorphic to ${}^{\kappa}\kappa$ by a Cantor--Schr\"oder--Bernstein argument.
\end{proof} 

We cannot drop the set-theoretic hypothesis in Corollary~\ref{cor:iso}(b).  Indeed, suppose that $T_0$ is a $\kappa$-Kurepa tree with $\kappa^+$ many branches and $\kappa^+<2^\kappa$, and let $T=T_0$ together with a branch added on top of every path through $T_0$ of limit length $<\kappa$. Then $[T]$ is a strong $\kappa$-Choquet space, but since it has cardinality $\kappa^+<2^\kappa$, it is not isomorphic to ${}^\kappa2$.  We remark that by a result of Silver, the hypothesis in Corollary~\ref{cor:iso}(b) does hold after collapsing an inaccessible $\lambda$ to $\kappa^+$ using the L\'evy collapse.

\begin{cor}
  Suppose that $G$ is $\Col(\kappa,\mathord{<}\lambda)$-generic over $V$, where $\lambda>\kappa$ is inaccessible in $V$, and work in $V[G]$.  Suppose that $X$ is a strong $\kappa$-Choquet space of weight $\leq\kappa$.  If $A\subset X$ is definable from ordinals and subsets of $\kappa$ and $|A|>\kappa$, then there is a continuous injection $f\colon {}^{\kappa}2\rightarrow A$.
\end{cor}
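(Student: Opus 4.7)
The plan is to reduce to the $\kappa$-perfect set theorem of \cite{schlicht-perfect} by pulling $A$ back through the continuous bijection produced by Theorem~\ref{thm:bij}, applying that theorem on ${}^\kappa\kappa$, and then pushing the resulting perfect set forward into $A$.

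First, I will apply Theorem~\ref{thm:bij} in $V[G]$ to produce a subtree $T\subset{}^{<\kappa}\kappa$ without end nodes and a continuous bijection $g\colon[T]\to X$. Since $X$ has weight $\leq\kappa$ and $\kappa^{<\kappa}=\kappa$, both $T$ and $g$ can be coded by subsets of $\kappa$: indeed, $g$ is determined by the family of preimages $(g^{-1}(B_\alpha))_{\alpha<\kappa}$ of a fixed basis of $X$, and each such preimage is an open subset of ${}^\kappa\kappa$, hence coded by a subset of ${}^{<\kappa}\kappa$. Viewing $[T]$ as a subset of ${}^\kappa\kappa$, the set $B:=g^{-1}(A)$ is then definable from ordinals and subsets of $\kappa$, and $|B|=|A|>\kappa$ because $g$ is a bijection.

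Next I will invoke the main result of \cite{schlicht-perfect}: in the L\'evy collapse model $V[G]$, every subset of ${}^\kappa\kappa$ of size $>\kappa$ which is definable from ordinals and subsets of $\kappa$ contains a continuous injective image of ${}^\kappa 2$. Applied to $B$, this yields a continuous injection $h\colon{}^\kappa 2\to B\subset[T]$. Then $g\circ h\colon{}^\kappa 2\to A$ is continuous as a composition of continuous maps, and injective because $h$ is injective and $g$ is a bijection; this is the required map. Note that it is precisely the fact that Theorem~\ref{thm:bij} provides a continuous \emph{bijection} (rather than just a surjection as in Theorem~\ref{thm:surj}) that makes the composition injective without any further fusion argument.

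The only delicate point, which I expect to be the main---but quite mild---obstacle, is the verification that the objects $T$ and $g$ from Theorem~\ref{thm:bij}, whose construction depends on a choice of winning strategy in the strong $\kappa$-Choquet game and an enumeration of basic open pairs, can be coded as subsets of $\kappa$ in $V[G]$ in such a way that Schlicht's definability hypothesis is satisfied for $B$. This should be routine given the weight $\leq\kappa$ assumption on $X$, but is the place one must be careful about what counts as ``definable from ordinals and subsets of $\kappa$'' in $V[G]$.
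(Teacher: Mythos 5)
Your proposal is correct and follows essentially the same route as the paper: pull $A$ back through the continuous bijection of Theorem~\ref{thm:bij}, apply the perfect set property of \cite{schlicht-perfect} to $f^{-1}(A)$ in the L\'evy collapse model, and push the resulting copy of ${}^\kappa 2$ forward. The paper states this more tersely (phrasing the conclusion as a perfect binary subtree $S\subset T$ with $[S]\subset f^{-1}(A)$) and does not dwell on the coding/definability point you rightly flag, but the argument is the same.
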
 

\begin{proof}
  Again, we have a continuous bijection $f\colon[T]\to X$ from Theorem~\ref{thm:bij} where $T\subset{}^{<\kappa}\kappa$.  By \cite{schlicht-perfect}, in this model the perfect set property holds for all subsets of ${}^\kappa\kappa$ that are definable from ordinals and subsets of $\kappa$, in particular for $f^{-1}(A)$. Hence there exists a perfect binary subtree $S\subset T$ such that $[S]\subset f^{-1}(A)$.
\end{proof}

We close this section with the question of whether there is a single strong $\kappa$-Choquet space into which all of them embed as a closed subspace.  Ilmavirta has shown in \cite{Ilmavirta} that ${}^\kappa2$ is universal for $\kappa$-additive spaces of weight $\leq\kappa$.  However, this space is not universal for all strong $\kappa$-Choquet spaces, and we have yet to find one that is. By theorem \ref{thm:surj}, $\kappa^\kappa$ is surjectively universal for strong $\kappa$-Choquet spaces of weight $\leq\kappa$.

\begin{question}
  Is there a universal strong $\kappa$-Choquet space of weight $\leq\kappa$?
\end{question}

In Section~5 we will discuss the analog of this question for generalized ultrametric spaces.

\section{Dynamic games} 

In this section, we consider a generalization of the strong $\kappa$-Choquet game where instead of playing open sets, each player may play the intersection of $<\lambda$ many open sets in each round, where $\lambda$ is a fixed cardinal $\leq\kappa$. We discuss the implications between the corresponding types of spaces as $\lambda$ varies.  We show that when $\lambda=\kappa$, the existence of a winning strategy in this game is equivalent to the existence of a winning tactic.

\begin{defn}
  A \emph{tactic} for player~II in the strong $\kappa$-Choquet game is a strategy which depends only on the most recent move of player~I.
\end{defn} 

In the classical strong Choquet game on a separable space, the existence of a winning strategy for player II implies the existence of a complete metric, and therefore the existence of a winning tactic. We will see that this is also true for the $\kappa$-dynamic version of the strong $\kappa$-Choquet game, which we define presently.

\begin{defn} 
  \begin{enumerate} 
  \item Suppose that $\lambda\leq\kappa$.  The \emph{$\lambda$-dynamic} strong $\kappa$-Choquet game is played as the strong $\kappa$-Choquet game, except that rather than open sets, the players may play the intersection of fewer than $\lambda$ many open sets (as usual intersected with the run up until that point).
  \item A space $X$ is said to be \emph{$\lambda$-dynamic strong $\kappa$-Choquet} if player~II has a winning strategy in the $\lambda$-dynamic strong $\kappa$-Choquet game on $X$.
  \end{enumerate}
\end{defn} 

Of course, the $\omega$-dynamic game is simply the ordinary game. Importantly, all of the results in the previous two sections hold just as well with strong $\kappa$-Choquet spaces replaced by $\lambda$-dynamic strong $\kappa$-Choquet spaces.  For instance, we have the following:
\begin{itemize}
\item The $\lambda$-dynamic strong $\kappa$-Choquet property is preserved by continuous open images (see Proposition~\ref{images of strong Choquet spaces}).
\item If $X$ is $\lambda$-dynamic strong $\kappa$-Choquet and of weight $\leq\kappa$ then $X$ is standard Borel (see Corollary~\ref{standard Borel}).
\item If $X$ is additionally $\kappa$-perfect and of size $>\kappa$ then $X$ is $\kappa$-Borel isomorphic with ${}^\kappa\kappa$ (see Corollary~\ref{cor:iso}).
\end{itemize}

We continue with further properties of the $\lambda$-dynamic games.

\begin{prop}\label{implications for lambda dynamic}
  Suppose that $\lambda, \mu$ are cardinals with $\omega\leq\lambda<\mu\leq\kappa$.
  \begin{enumerate} 
  \item Every $\lambda$-dynamic strong $\kappa$-Choquet space is $\mu$-dynamic strong $\kappa$-Choquet.
  \item If $\lambda$ is regular, then there is a $\mu$-dynamic strong $\kappa$-Choquet space of weight $2^{\mathord{<}\lambda}$ which is not $\lambda$-dynamic strong $\kappa$-Choquet.
  \end{enumerate} 
\end{prop}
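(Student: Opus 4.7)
For part (a), I would transfer a winning strategy for player~II from the $\lambda$-dynamic game to the $\mu$-dynamic game via a shadow game. Let $\tau$ be a winning strategy for II in the $\lambda$-dynamic game. Given a $\mu$-dynamic move $U_\alpha=\bigcap_{i<\gamma_\alpha}W_{\alpha,i}$ with $\gamma_\alpha<\mu$ and each $W_{\alpha,i}$ open, I expand the $\mu$-round $\alpha$ into $\gamma_\alpha$ sub-rounds of a shadow $\lambda$-dynamic game, where at sub-round $(\alpha,i)$ the shadow player~I plays $W_{\alpha,i}$ intersected with the run, paired with the point $x_\alpha$. Player~II responds in the shadow game via $\tau$ with $\tilde V_{\alpha,i}$, and in the actual $\mu$-dynamic game plays $V_\alpha=\bigcap_{i<\gamma_\alpha}\tilde V_{\alpha,i}$, which is a $\mu$-intersection (combining fewer than $\mu$ many $\lambda$-intersections). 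The shadow game has ordinal length $\sum_\alpha\gamma_\alpha\leq\kappa$, since every partial sum has cardinality less than $\kappa$ by regularity of $\kappa$. The $\mu$-dynamic winning condition at a limit round $\xi$ reduces to the shadow condition at sub-round $(\xi,0)$, giving $\bigcap_{\alpha<\xi}U_\alpha\supseteq\bigcap_{(\alpha,i)<(\xi,0)}\tilde V_{\alpha,i}\neq\emptyset$.

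For part (b), I propose the space $X\subseteq{}^\lambda 2$ of eventually zero sequences, with the subspace topology. Basic open sets $N_s\cap X$ for $s\in{}^{<\lambda}2$ give $X$ weight $2^{<\lambda}$. Each point $x\in X$ has character $\lambda$, so $\{x\}=\bigcap_{\beta<\lambda}N_{x\restriction\beta}\cap X$ is an intersection of $\lambda$ many open sets and hence a $\mu$-intersection (since $\lambda<\mu$ and $\mu$ is a cardinal, so $\lambda^+\leq\mu$). Player~II wins the $\mu$-dynamic game by playing $V_\alpha=\{x_\alpha\}$: the game is thereby forced into $U_\beta=\{x_0\}$ for all $\beta\geq 1$, so $\bigcap_{\alpha<\xi}U_\alpha=\{x_0\}\neq\emptyset$ at every limit $\xi$.

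To show $X$ is not $\lambda$-dynamic strong $\kappa$-Choquet, I describe a winning strategy for player~I. Given any $\lambda$-intersection $V_\alpha=\bigcap_{i<\gamma_\alpha}W_i$ containing $x_\alpha$ (with $\gamma_\alpha<\lambda$), each $W_i$ contains a basic open neighborhood $N_{s_i}$ of $x_\alpha$ with $|s_i|<\lambda$. By regularity of $\lambda$, $\eta^*_\alpha:=\sup_i|s_i|<\lambda$, and hence $V_\alpha\supseteq N_{x_\alpha\restriction\eta^*_\alpha}\cap X$. Player~I's strategy is to play $U_{\alpha+1}=N_{(x_\alpha\restriction\eta^*_\alpha)^\smallfrown 1}\cap X$, placing a $1$ at position $\eta^*_\alpha$, and to set $x_{\alpha+1}$ to be the eventually zero extension. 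Since $V_{\alpha+1}\subseteq U_{\alpha+1}$ forces $\eta^*_{\alpha+1}\geq\eta^*_\alpha+1$, an induction through successor and limit stages yields $\eta^*_\alpha\geq\alpha$ throughout; consequently the successive defining prefixes determine at stage $\lambda$ a branch $b\in{}^\lambda 2$ with $1$s at cofinally many positions in $\lambda$. Hence $b\notin X$ and $\bigcap_{\alpha<\lambda}U_\alpha=\{b\}\cap X=\emptyset$, so player~II loses. The main technical obstacle is the induction verifying that $\eta^*_\alpha$ grows sufficiently quickly against every player~II strategy; this uses regularity of $\lambda$ both to bound each partial support below $\lambda$ and to ensure the supremum reaches $\lambda$ at the critical limit stage.
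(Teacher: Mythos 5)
Your proof of part (a) is correct and is essentially the paper's argument: a side-run (shadow run) of the $\lambda$-dynamic game in which the $<\mu$ many open sets making up each of player~I's moves are fed to player~II one at a time, with player~II answering in the $\mu$-dynamic game by the intersection of her shadow responses; your additional remarks on the ordinal length of the shadow run and on reducing the limit condition are correct refinements of what the paper leaves implicit.

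For part (b) you take a genuinely different, though closely related, example. The paper uses the ``club filter'' space $\mathcal{C}_\lambda$ of all $x\in{}^{\lambda}2$ whose support contains a club in $\lambda$: player~I wins the $\lambda$-dynamic game by inserting a $0$ at each limit coordinate, so the limit branch misses a club, while player~II wins the $\mu$-dynamic game by answering with a singleton (each point being a $\lambda$-fold, hence $<\mu$-fold, intersection of open sets). You instead use the eventually-zero sequences in ${}^{\lambda}2$, keep the identical singleton trick for the $\mu$-dynamic game, and have player~I insert $1$'s at a cofinal set of coordinates so that the limit branch fails to be eventually zero. Both examples are subspaces of ${}^{\lambda}2$ with the canonical base of size $2^{<\lambda}$, and both player-I strategies hinge on the same use of regularity of $\lambda$: any $<\lambda$-fold intersection of open neighborhoods of a point contains a basic neighborhood $N_{x\upharpoonright\eta}$ with $\eta<\lambda$, so player~II cannot ``jump ahead'' to length $\lambda$, and the strictly increasing stems produced over $\lambda$ rounds converge to a single branch outside the space. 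Your example is arguably more elementary (no appeal to clubs or to the argument of Halko--Shelah), and it has the side effect of producing a space of size $2^{<\lambda}$ rather than $2^{\lambda}$; the paper's example has the feature of connecting directly to the known failure of the $\kappa$-Baire property for the club filter. One small point worth making explicit in your write-up: the inequality $\eta^{*}_{\alpha+1}\geq\eta^{*}_{\alpha}+1$ follows because $N_{x_{\alpha+1}\upharpoonright\eta^{*}_{\alpha+1}}\cap X\subseteq V_{\alpha+1}\subseteq U_{\alpha+1}=N_{t_{\alpha+1}}\cap X$ forces $\eta^{*}_{\alpha+1}\geq\operatorname{dom}(t_{\alpha+1})$, since a shorter stem would admit eventually-zero extensions avoiding $t_{\alpha+1}$; with that observed, the cofinality of the positions of the inserted $1$'s, and hence the emptiness of $\bigcap_{\alpha<\lambda}U_\alpha$, is immediate.
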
 

\begin{proof} 
  (a) Given a winning strategy for player~II in the $\lambda$-dynamic strong $\kappa$-Choquet game, we can easily derive a winning strategy for player~II in the $\mu$-dynamic strong $\kappa$-Choquet game as follows. Suppose that at some stage player~I plays $(W,x)$ where $W=\bigcap_{\alpha<\theta}W_\alpha$, each $W_\alpha$ is open, and $\theta<\mu$. Then player~II turns to a side-run of the $\lambda$-dynamic game where she instructs player~I to play $(W_\alpha,x)$ sequentially and responds with $V_\alpha$ according to his winning strategy there. She then responds in the $\mu$-dynamic game with $\bigcap_{\alpha<\theta}V_\alpha$. Since $\theta<\mu$, this is a valid move for player~II and it is easy to see this strategy is winning too.

  (b) Let $\mathcal{C}_{\lambda}$ denote the set of all $x\in{}^{\lambda}2$ such that $\{\alpha<\lambda\mid x(\alpha)=1\}$ contains a club in $\lambda$. 
  Then Player~I has a winning strategy in the $\lambda$-dynamic game by simply extending with a $0$ in limit steps $<\lambda$ (as in \cite[Theorem~4.2]{halko}). On the other hand, player~II has a winning strategy in the $\mu$-dynamic game since player~II can always answer the first move with a singleton. 
\end{proof} 
 
Given a topological space $X$, we define the \emph{$\lambda$-topology} on $X$ to be the topology generated by sets which are the intersection of fewer than $\lambda$ many open sets.  If $X$ has weight $\leq\kappa$ and $\lambda\leq\kappa$, then the $\lambda$-topology on $X$ has weight $\leq\kappa$ as well (as usual, $\kappa^{<\kappa}=\kappa$).

\begin{lem}
  Suppose that $\lambda\leq\kappa$ and that $X$ is a space of weight
  $\leq\kappa$. Then $X$ is $\lambda$-dynamic strong $\kappa$-Choquet
  if and only if its $\lambda$-topology is strong $\kappa$-Choquet.
\end{lem}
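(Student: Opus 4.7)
The plan is to exploit the dictionary that a basic open set of the $\lambda$-topology on $X$ is, by definition, an intersection of fewer than $\lambda$ many $X$-open sets, i.e., exactly the shape of a single legal move in the $\lambda$-dynamic strong $\kappa$-Choquet game on $X$. Granted this, a descending intersection of such sets has the same meaning in either setting, so the two winning conditions agree on any run all of whose moves are basic. Since the $\lambda$-topology has weight $\leq\kappa$ (as noted just before the statement), Lemma~\ref{strategy versus basic strategy} lets us, when convenient, restrict player~II to basic open plays in the strong $\kappa$-Choquet game on the $\lambda$-topology.

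For the direction ``the $\lambda$-topology is strong $\kappa$-Choquet $\Rightarrow$ $X$ is $\lambda$-dynamic strong $\kappa$-Choquet'', fix a winning strategy $\tau$ for player~II in the strong $\kappa$-Choquet game on the $\lambda$-topology, and use Lemma~\ref{strategy versus basic strategy} to assume the outputs of $\tau$ are basic. In the $\lambda$-dynamic game on $X$ each move $(U_\alpha,x_\alpha)$ of player~I is already a basic open of the $\lambda$-topology, so the play is verbatim a legal run of the strong $\kappa$-Choquet game on the $\lambda$-topology. The responses $V_\alpha=\tau(\ldots,U_\alpha,x_\alpha)$ are therefore intersections of fewer than $\lambda$ many $X$-open sets, hence legal replies in the $\lambda$-dynamic game, and the winning condition transfers by the dictionary above.

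For the converse, fix a winning strategy $\tau$ for player~II in the $\lambda$-dynamic game on $X$, and consider a run of the strong $\kappa$-Choquet game on the $\lambda$-topology in which player~I plays arbitrary $\lambda$-opens $(U_\alpha,x_\alpha)$. Refine each $U_\alpha$ to a basic open $B_\alpha$ of the $\lambda$-topology with $x_\alpha\in B_\alpha\subset U_\alpha$; the sequence $(B_\alpha,x_\alpha)$ forms a legal shadow run in the $\lambda$-dynamic game on $X$. Let player~II respond in the actual game with $V_\alpha:=\tau(\ldots,B_\alpha,x_\alpha)$, which is a basic open of the $\lambda$-topology contained in $B_\alpha\subset U_\alpha$, hence a legal reply. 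The limit intersections along the actual run coincide with those along the shadow run, so the winning property of $\tau$ carries over.

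The lemma is essentially a definitional translation, so no step is really hard; the one piece of bookkeeping worth flagging is the shrinking of arbitrary $\lambda$-opens to basic ones in the converse direction, which is harmless because the distinguished point $x_\alpha$ is carried across the two games unchanged. The one prerequisite worth stressing is that the $\lambda$-topology on $X$ has weight $\leq\kappa$ (under $\kappa^{<\kappa}=\kappa$), which is what makes Lemma~\ref{strategy versus basic strategy} available on it.
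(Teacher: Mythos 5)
Your proof is correct: the paper actually states this lemma without any proof, treating it as a routine definitional translation, and your argument supplies exactly the intended one (moves of the $\lambda$-dynamic game are the basic open sets of the $\lambda$-topology, Lemma~\ref{strategy versus basic strategy} reduces the strong game on the $\lambda$-topology to basic plays, and the shadow-run refinement is legitimate because the distinguished point $x_\alpha$ travels with the shrunken set). Your flagged prerequisite --- that the $\lambda$-topology still has weight $\leq\kappa$ under $\kappa^{<\kappa}=\kappa$ --- is precisely the remark the paper makes immediately before the lemma, so nothing is missing.
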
 


We now come to the promised result that in the $\kappa$-dynamic game,
winning strategies give rise to winning tactics.

\begin{thm}
  \label{strategy to tactic}
  Suppose that $X$ is $\kappa$-dynamic strong $\kappa$-Choquet of weight $\leq\kappa$. Then there is a winning tactic for player~II in the $\kappa$-dynamic strong $\kappa$-Choquet game on $X$.
\end{thm}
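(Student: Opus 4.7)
The plan is to encode the game's history into each of II's own responses by tagging them with basis elements; the $\kappa$-dynamic flexibility of moves (intersections of $<\kappa$ many open sets) together with $\kappa^{<\kappa}=\kappa$ ensures that the basis has enough room to do this, and so II can simulate her winning strategy as a tactic that reads the history from her most recent opponent move.

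First, by the preceding lemma it suffices to produce a winning tactic in the strong $\kappa$-Choquet game on the $\kappa$-topology of $X$, which is $\kappa$-additive of weight $\leq\kappa$. Fix a basis $\{B_\gamma : \gamma<\kappa\}$ of this topology closed under $<\kappa$-intersections, and by Lemma~\ref{strategy versus basic strategy} let $\sigma$ be a winning strategy for II playing only basic open sets.

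Next, I will recursively construct a tag $e(h) \in \kappa$ for each (basic-open) history $h$ of length $<\kappa$ satisfying (a) the current point played in $h$ lies in $B_{e(h)}$; (b) $B_{e(h')} \subsetneq B_{e(h)}$ whenever $h'$ is a one-step extension of $h$; and (c) distinct one-step extensions of $h$ receive tags giving essentially disjoint basis elements. Defining the modified strategy $\sigma^*(h) := \sigma(h) \cap B_{e(h)}$ then gives a basic-open winning strategy in which each move ``carries'' its history. For any basic open $U$ appearing in a run against $\sigma^*$, conditions (b) and (c) ensure that a canonical maximal history $h(U)$ with $U \subset B_{e(h(U))}$ can be recovered from $U$ alone. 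The tactic is then defined as $\tau(U, x) := \sigma^*(h(U) \frown (U, x))$, a function of $(U, x)$ only. In any valid run where II plays according to $\tau$, the recovered histories at successive stages equal the actual play, so II's moves coincide with $\sigma^*$'s, and the run is therefore won.

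The main obstacle will be the joint realization of the three conditions on $e$ together with the recoverability of $h(U)$: the tags must shrink strictly and distinguish extensions while still containing the designated points, and we must verify that $\sigma^*$ remains winning even after intersecting with the tag basis elements (i.e.\ that intersections of tags along valid runs stay nonempty at all limits). The $\kappa$-additivity of the $\kappa$-topology and the identity $\kappa^{<\kappa}=\kappa$ are what make the recursive construction feasible.
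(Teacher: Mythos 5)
Your first step---passing to the $\kappa$-topology, which is $\kappa$-additive, strong $\kappa$-Choquet, and still of weight $\leq\kappa$---is exactly how the paper begins. But the history-coding scheme that follows has a genuine gap, and it is not just the technical obstacle you flag at the end. A tactic in the \emph{strong} game must reconstruct the entire prior history, \emph{including every point $x_\beta$ played by player~I}, since $\sigma$ may depend on these, from the single set $U$ it is handed. Your condition (c) therefore demands that distinct one-step extensions of a position receive tags with no common open subset, and this fails for two reasons. First, cardinality: a position can be extended by any pair $(U',x')$ with $U'$ ranging over up to $2^\kappa$ open sets and $x'$ over up to $2^\kappa$ points, so there are up to $2^\kappa$ one-step extensions, while the tags come from a basis of size $\kappa$; they cannot even be assigned injectively. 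Second, and decisively: if player~I extends the same position by $(U_1,x)$ and by $(U_2,x)$ with the same point $x$ but $U_1\neq U_2$, then both of II's tagged replies must contain $x$ (a legal reply in the strong game contains the point just played), so both tags contain $x$, their intersection is a nonempty open neighborhood of $x$, and player~I may legally play a set $U''$ inside it; then $U''$ lies below the tags of two incomparable histories and ``the maximal history with $U''\subset B_{e(h)}$'' is not well defined. Without exact recovery, the histories $h(U_\alpha)$ computed at successive rounds need not cohere into initial segments of a single run against $\sigma$, and then nothing forces $\bigcap_{\alpha<\lambda}U_\alpha\neq\emptyset$ at limits. Settling for ``some valid history'' rather than the actual one does not help, precisely because of this coherence requirement. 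To rescue a coding argument you would first need to replace $\sigma$ by a winning strategy whose value depends only on data with at most $\kappa$ possibilities per round (say, only on II's own previous basic moves), and that reduction is essentially the content of the theorem.

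The paper avoids histories entirely. After passing to the $\kappa$-topology, Theorem~\ref{thm:surj} yields a continuous \emph{open} surjection $f\colon{}^\kappa\kappa\to X$. On ${}^\kappa\kappa$ player~II has an obvious winning tactic: reply to $(U,x)$ with any basic $N_s$ such that $x\in N_s\subset U$; the stems then increase along a run, so all limit intersections are nonempty. This tactic is pushed forward through $f$ as in Proposition~\ref{images of strong Choquet spaces}, and since the pushed-forward reply depends only on $f^{-1}(U)$ and a canonically chosen preimage of $x$, it is again a tactic. The openness of $f$ (available because the $\kappa$-topology is $\kappa$-additive) is what makes the pushed-forward moves legal. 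You may want to compare this with your sketch: the universal surjection does the work that your tags were meant to do, by transporting the problem to a space where no memory is needed at all.
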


\begin{proof}
  Since the $\kappa$-topology on $X$ is strong $\kappa$-Choquet, there
  exists a continuous surjection $f\colon {}^{\kappa}\kappa\rightarrow
  X$ by Proposition~\ref{thm:bij}.  Since the $\kappa$-topology on $X$
  is $\kappa$-additive, we can assume that $f$ maps open sets to
  $\kappa$-open sets.  Since ${}^{\kappa}\kappa$ is $\kappa$-additive
  as well, every $\kappa$-open set is mapped to a $\kappa$-open set.
  Note that player II has a winning tactic in the strong
  $\kappa$-Choquet game on ${}^{\kappa}\kappa$, in fact player~II wins
  every run no matter what she plays.
  As in the proof of Proposition~\ref{images of strong Choquet
    spaces}, this tactic can be transferred to a winning tactic for
  player~II in the $\kappa$-dynamic strong $\kappa$-Choquet game on
  $X$.
\end{proof} 


\section{Generalized ultrametric spaces}

In this section we discuss a generalization of Polish ultrametric spaces to higher cardinalities; similar spaces have been studied in \cite{delon}, and extensively in a series of articles beginning with \cite{priess}.  We generalize several properties of ultrametric spaces and their isometry groups to $\kappa$-ultrametric spaces, which we presently define.

\begin{defn}
  Let $D_\kappa$ denote the naturally ordered set of all symbols
  $1/\alpha$ for $0<\alpha<\kappa$, together with $0$.  A space $X$
  together with a map $d\colon X\times X\to D_\kappa$ is said to be a
  \emph{$\kappa$-ultrametric space} iff the usual axioms hold:
  \begin{itemize}
  \item $d(x,y)=0$ iff $x=y$;
  \item $d(x,y)=d(y,x)$; and
  \item $d(x,z)\leq\max(d(x,y),d(y,z))$.
  \end{itemize}
  We say that a sequence $(x_\alpha)_{\alpha<\lambda}$ is
  \emph{$\kappa$-Cauchy} if $d(x_\alpha,x_\beta)\to0$ whenever
  $\alpha,\beta\to\lambda$ (here $\lambda\leq\kappa$), and we say that
  $X$ is \emph{complete} if every $\kappa$-Cauchy sequence converges.
\end{defn}

For example, ${}^\kappa\kappa$ is a complete $\kappa$-ultrametric space with the metric $d(x,y)=1/\alpha$, where $\alpha$ is the length of the largest common initial segment of $x$ and $y$.  Also, the subset $\Sym(\kappa)\subset\kappa^\kappa$ consisting of all bijections of $\kappa$ is a complete $\kappa$-ultrametric space with the metric given by $\max(d(x,y),d(x^{-1},y^{-1}))$.

It has been shown in \cite[Section~3]{Sikorski} that ${}^\kappa\kappa$ contains a homeomorphic copy of every $\kappa$-ultrametric space of weight $\leq\kappa$.  In fact, we will show that ${}^\kappa\kappa$ exhibits a high degree of homogeneity and plays the role of a universal Urysohn space in this context.

Before stating this result, we take a moment to generalize the notion of $\kappa$-ultrametric spaces a little further.  Let $R$ be any linearly ordered set with a minimal zero element.  Then an \emph{$R$-ultrametric space} is defined analogously with $\kappa$-ultrametric spaces, but with the set of metric values $D_\kappa$ replaced by the set $R$.  Ultrametric spaces have been studied in even greater generality; for instance in \cite{priess} the authors consider metrics which take values in a partially ordered set.  We confine ourselves to the linearly ordered case.  We remark that these generalized ultrametric spaces share many of the basic properties of ordinary ultrametric spaces, for instance:

\begin{itemize} 
\item For all $x,y,z$, two of the distances $d(x,y)$, $d(x,z)$, and $d(y,z)$ are equal, and the third is no greater;
\item Every open ball is closed;
\item Any two open balls are either disjoint or nested, and;
\item The set of metric values on any dense subset of the space is
  equal to the set of metric values on the whole space
\end{itemize} 

In the remainder of the section, we will show that for each reasonable set of metric values $R$ there exists a universal Urysohn space among the $R$-ultrametric spaces.

\begin{defn}
  An $R$-ultrametric space $U$ is said to be \emph{Urysohn} iff $U$ is complete, has density $\leq\kappa$, and satisfies the \emph{extension property}: for every $X\subset U$ of cardinality $<\kappa$ and for every one-point ($R$-ultrametric) extension $X\cup\{a\}$, there exists $u\in U$ such that $d(x,a)=d(x,u)$ for all $x\in X$.
\end{defn}

The completeness and extension properties of the Urysohn space $U_R$ of course guarantee that it is universal for $R$-ultrametric spaces of density $\leq\kappa$. As we have mentioned above, the $\kappa$-ultrametric Urysohn space is simply ${}^\kappa\kappa$ itself.

\begin{prop}
  ${}^\kappa\kappa$ is a $\kappa$-ultrametric Urysohn space.
\end{prop}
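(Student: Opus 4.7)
The plan is to verify the three defining conditions of Urysohn: completeness, density at most $\kappa$, and the extension property. The first two are routine. For completeness, given a $\kappa$-Cauchy sequence $(x_\alpha)_{\alpha<\lambda}$ with $\lambda\leq\kappa$ a limit, the Cauchy condition says that for each $\beta<\kappa$ there is $\gamma<\lambda$ beyond which all terms agree on coordinates $<\beta$; thus there is a well-defined pointwise limit $x\in{}^\kappa\kappa$, and by regularity of $\kappa$ one checks $x_\alpha\to x$. For density $\leq\kappa$, the set of sequences which are eventually constantly $0$ has cardinality $\kappa^{<\kappa}=\kappa$ and is dense, since given any $x\in{}^\kappa\kappa$ and $\alpha<\kappa$, the sequence $(x\restriction\alpha)^{\smallfrown}0^\kappa$ is within distance $1/\alpha$ of $x$.

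The main content, and the only real obstacle, is the extension property. Fix $X\subset{}^\kappa\kappa$ of cardinality $<\kappa$ and a one-point extension $X\cup\{a\}$; for each $x\in X$ write $d(x,a)=1/\alpha_x$ (we may assume $a\notin X$, since otherwise $u=a$ works). We must construct $u\in{}^\kappa\kappa$ such that, for every $x\in X$, the longest common initial segment of $u$ and $x$ has length exactly $\alpha_x$. The plan is to build $u$ coordinate by coordinate, using the ultrametric law on $X\cup\{a\}$ to guarantee that all the constraints cohere.

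The key lemma, a consequence of the ``two-of-three are equal'' form of the ultrametric inequality, is the following: for each $\beta<\kappa$, all elements of $X_\beta:=\{x\in X:\alpha_x>\beta\}$ agree on coordinates $<\beta+1$. Indeed, if $x,y\in X_\beta$ with (say) $\alpha_x\leq\alpha_y$, then either $\alpha_x<\alpha_y$, in which case $d(x,y)=1/\alpha_x\leq 1/(\beta+1)$ so $x$ and $y$ share an initial segment of length $\geq\alpha_x>\beta$, or $\alpha_x=\alpha_y$, in which case the ultrametric inequality gives $d(x,y)\leq 1/\alpha_x$ and the same conclusion. I would then define $u(\beta)$ as follows: if $X_\beta\neq\emptyset$, set $u(\beta)$ equal to the common value $y(\beta)$ for any $y\in X_\beta$; otherwise, choose any value in $\kappa$ avoiding the set $\{x(\beta):\alpha_x=\beta\}$, which is possible because this set has size $<\kappa$.

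Finally, I would verify that this $u$ works. For each $x\in X$ and each $\beta<\alpha_x$ we have $x\in X_\beta$, so $u(\beta)=x(\beta)$ by definition, giving agreement up through $\alpha_x$. At position $\alpha_x$, if $X_{\alpha_x}\neq\emptyset$ then $u(\alpha_x)=y(\alpha_x)$ for some $y$ with $\alpha_y>\alpha_x$; then $d(x,y)=1/\alpha_x$ forces $x(\alpha_x)\neq y(\alpha_x)=u(\alpha_x)$. If instead $X_{\alpha_x}=\emptyset$, the direct choice of $u(\alpha_x)$ was made to avoid $x(\alpha_x)$. Thus the longest common initial segment of $u$ and $x$ has length exactly $\alpha_x$, so $d(x,u)=1/\alpha_x=d(x,a)$, completing the proof.
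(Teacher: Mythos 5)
Your proof is correct and takes essentially the same route as the paper: you build $u$ coordinate by coordinate, forcing agreement with every $x$ satisfying $d(x,a)<1/\beta$ and disagreement with those satisfying $d(x,a)=1/\beta$, and you use the ultrametric (two-of-three) law on $X\cup\{a\}$ to show these constraints cohere. Your "key lemma" that $X_\beta$ is constant on coordinates $<\beta+1$ and your verification at position $\alpha_x$ are exactly the two contradiction cases in the paper's argument, just packaged as positive statements; the routine checks of completeness and density are fine.
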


\begin{proof}
  We verify that ${}^\kappa\kappa$ has the extension property. Let $\lambda<\kappa$ and suppose that $x_\alpha\in{}^\kappa\kappa$ for $\alpha<\lambda$. Let $X\cup\{a\}$ be a $\kappa$-ultrametric extension of $X$. We inductively build a branch $u\in{}^\kappa\kappa$ such that $d(x,a)=d(x,u)$ for all $x\in X$. At each stage $\alpha$ we must select $u(\alpha)$ to agree with all those $x\in X$ such that $d(x,a)<1/\alpha$ and disagree with all $x\in X$ such that $d(x,a)=1/\alpha$.

  If this prescription fails, let $\alpha$ be the least place where it gives contradictory instructions. Then either $u$ is required to agree with $x(\alpha)$ and disagree with $x'(\alpha)$ but $x(\alpha)=x'(\alpha)$, or $u(\alpha)$ is required to agree with both $x(\alpha)$ and $x'(\alpha)$ but $x(\alpha)\neq x'(\alpha)$. In the first case we have $d(a,x)<1/\alpha$ and $d(a,x')=1/\alpha$, so $d(x,x')\leq1/\alpha$. But then $x(\alpha)=x'(\alpha)$ implies that $d(x,x')<1/\alpha$, which in turn implies that $d(a,x')<1/\alpha$ after all, a contradiction. In the second case we have $d(a,x),d(a,x')<1/\alpha$ and so $d(x,x')<1/\alpha$, contradicting that $x(\alpha)\neq x'(\alpha)$.
\end{proof}

The following result states that a Urysohn space exists for any reasonable set of distances $R$. As always, we are assuming that $\kappa^{<\kappa}=\kappa$.

\begin{thm}
  \label{thm:urysohn}
  If $R$ is a set of distances which admits greatest lower bounds and such that $R\smallsetminus0$ has coinitiality $\kappa$, then there exists an $R$-ultrametric Urysohn space $U_R$.
\end{thm}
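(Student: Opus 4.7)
The plan is to adapt the classical construction of the Urysohn ultrametric space to the $R$-ultrametric setting, in two stages: first construct a ``weakly Urysohn'' $R$-ultrametric space $V$ of cardinality $\leq\kappa$ by a transfinite recursion of length $\kappa$, and then pass to its $\kappa$-Cauchy completion $U_R$.

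For the construction of $V$, I would define an increasing chain $V_0\subseteq V_1\subseteq\cdots$ by induction on $\alpha<\kappa$. Start with $V_0$ a singleton. At stage $\alpha+1$, enumerate all pairs $(A,f)$ such that $A\subseteq V_\alpha$ with $|A|<\kappa$ and $f\colon A\to R$ defines a valid one-point $R$-ultrametric extension, and for each such pair adjoin a fresh witness $a_{A,f}$ with $d(a_{A,f},x)=f(x)$ for $x\in A$; distances between fresh witnesses are then forced by the ultrametric inequality. Using $\kappa^{<\kappa}=\kappa$, and (if necessary) restricting at the outset to a glb- and coinitially-closed subset $R_0\subseteq R$ of size $\leq\kappa$, each $V_{\alpha+1}$ has cardinality $\leq\kappa$. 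Taking unions at limit stages, set $V=\bigcup_{\alpha<\kappa}V_\alpha$. By the regularity of $\kappa$, every subset of $V$ of size $<\kappa$ is contained in some $V_\alpha$, and hence every one-point $R$-ultrametric extension of such a subset is realized already in $V_{\alpha+1}\subseteq V$.

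For the completion, I would define $U_R$ as the set of equivalence classes of $\kappa$-Cauchy sequences in $V$ under $(x_\alpha)\sim(y_\alpha)\iff d(x_\alpha,y_\alpha)\to 0$, where convergence to $0$ is interpreted via a coinitial decreasing sequence in $R\smallsetminus 0$. In any ultrametric space the distances between two Cauchy sequences eventually stabilize, so $d([\bar x],[\bar y])$ can be defined as the eventual common value, which exists as a glb using that $R$ admits greatest lower bounds. Standard arguments then show that $V$ embeds isometrically and densely in $U_R$, that $U_R$ is complete, and that its density is $\leq|V|\leq\kappa$.

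The heart of the proof, and the expected obstacle, is verifying that $U_R$ has the extension property. Given $X\subseteq U_R$ with $|X|<\kappa$ and a one-point extension $f\colon X\to R$, fix a coinitial decreasing sequence $(r_\gamma)_{\gamma<\kappa}$ in $R\smallsetminus 0$ and choose for each $x\in X$ approximations $x^\gamma\in V$ with $d(x^\gamma,x)<r_\gamma$. Define an approximating one-point extension $f_\gamma\colon\{x^\gamma:x\in X\}\to R$ which agrees with $f$ on values bounded away from $0$ by $r_\gamma$ and is adjusted otherwise to respect the ultrametric identities among the $x^\gamma$. By the weak Urysohn property of $V$, realize each $f_\gamma$ by a witness $a_\gamma\in V$, then check that $(a_\gamma)_{\gamma<\kappa}$ is $\kappa$-Cauchy and that its limit $a\in U_R$ satisfies $d(x,a)=f(x)$ for all $x\in X$. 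The delicate point is to define the $f_\gamma$ so that they are genuinely valid $R$-ultrametric extensions and so that the errors from replacing $x$ with $x^\gamma$ are controlled below $r_\gamma$; this is precisely where both hypotheses on $R$, namely glb-closure and coinitiality $\kappa$ of $R\smallsetminus 0$, are essential.
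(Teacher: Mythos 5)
Your proposal follows essentially the same route as the paper: a length-$\kappa$ tower that realizes all one-point extensions of subsets of size $<\kappa$ (the paper packages each stage as the Kat\v{e}tov function space $E(X)$ of small-supported one-point extensions, which is exactly what your witness-adjoining step amounts to), followed by passing to the $\kappa$-Cauchy completion and checking that the extension property survives, a step the paper likewise does not carry out in detail but delegates to a citation. The one place you are too quick is the claim that distances between fresh witnesses are \emph{forced} by the ultrametric inequality---they are only constrained, and making a consistent choice is precisely the content of the paper's lemma that a Kat\v{e}tov function on $Z$ extends to one on all of $X$ supported on $Z$; this is routine in the ultrametric setting and does not affect the correctness of your outline.
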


Our proof of this result follows the Kat\v{e}tov-style construction of an ordinary ultrametric Urysohn space given in \cite{shao}.  This version of the proof will allow us to conclude that the isometry group of each Urysohn space is universal among isometry groups.  Since the details of the construction are very similar to the argument found there, we give only a short outline.

To begin, if $X$ is a $\kappa$-ultrametric space, we say that a function $f\colon X\to R$ is \emph{Kat\v{e}tov} if for all $x,y\in X$:
\begin{align*}
  d(x,y)&\leq\max(f(x),f(y))\;,\\
  f(x)&\leq\max(d(x,y),f(y)),\text{ and}\\
  f(y)&\leq\max(d(x,y),f(x))\;.
\end{align*}
The Kat\v{e}tov functions thus correspond to the one-point extensions
of $X$.  If $Z\subset X$, we say that a Kat\v{e}tov function is
\emph{supported} on $Z\subset X$ if for all $x\in X$:
\begin{equation*}
  f(x)=
  \begin{cases}
    \max(f(z),d(z,x))&\text{if }z\in Z\text{ and }f(z)\neq d(z,x)\\
    \inf_{z\in Z}f(z)&\text{if for all }z\in Z,f(z)=d(z,x)
  \end{cases}
\end{equation*}
The following lemma states that Kat\v{e}tov functions are plentiful.

\begin{lem}[cf.\ {\protect \cite[Theorem 5.5]{shao}}]
  \label{lem:supports}
  If $Z\subset X$, then any Kat\v{e}tov partial function on $Z$
  extends to a Kat\v{e}tov function on $X$ which is supported on $Z$.
\end{lem}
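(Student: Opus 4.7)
The plan is to define the extension $\tilde{f}\colon X \to R$ directly by the support formula given just before the lemma, and then verify that it is well-defined, that it extends the given partial Kat\v{e}tov function on $Z$, and that it satisfies the Kat\v{e}tov axioms on all of $X$. The ``supported on $Z$'' property will then hold by construction.

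The first subtask is well-definedness of the formula. The second clause takes an infimum in $R$, which exists by the hypothesis in Theorem~\ref{thm:urysohn} that $R$ admits greatest lower bounds. The delicate part is the first clause, where I need to show that if $z_1, z_2 \in Z$ both satisfy $f(z_i)\neq d(z_i,x)$, then $\max(f(z_1),d(z_1,x)) = \max(f(z_2),d(z_2,x))$. This follows from a short case analysis on the relative sizes of $f(z_1), f(z_2), d(z_1,x), d(z_2,x)$, and $d(z_1,z_2)$, using the ultrametric isosceles triangle principle together with the Kat\v{e}tov inequalities already assumed on $Z$. Intuitively, if one thinks of $f(z)$ as the distance $d(a,z)$ to a hypothetical extension point $a$, both expressions must equal $d(a,x)$ by the isosceles property.

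Next I would check that $\tilde{f}$ extends $f$. For $z\in Z$ with $f(z)\neq 0$, applying the first clause with the witness $z$ itself yields $\tilde{f}(z)=\max(f(z),d(z,z))=f(z)$. If instead $f(z)=0$, then the Kat\v{e}tov inequalities on $Z$ force $f(z')=d(z',z)$ for every $z'\in Z$, so the second clause applies and gives $\tilde{f}(z)=\inf_{z'\in Z}f(z')=0$, as required since the infimum is attained at $z$ itself.

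The main work is verifying the three Kat\v{e}tov axioms for $\tilde{f}$ at arbitrary $x,y\in X$, by splitting into cases according to which clause defines $\tilde{f}(x)$ and $\tilde{f}(y)$. In the principal case each value is given by the first clause via witnesses $z_x, z_y \in Z$, and all three inequalities reduce, via the ultrametric and Kat\v{e}tov relations on $\{z_x, z_y, x, y\}$, to the kind of identities already verified in the well-definedness step. The remaining cases, where one or both of $\tilde{f}(x), \tilde{f}(y)$ is an infimum, are handled by observing that $\inf_{z\in Z}f(z)\leq f(z)$ for every $z\in Z$, combined with the hypothesis of clause two which supplies $f(z)=d(z,x)$ (or $f(z)=d(z,y)$) exactly. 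I expect the main obstacle to be the bookkeeping in this case analysis, but each sub-case reduces to a one-line use of the ultrametric inequality; this is essentially the content of the classical argument in \cite[Theorem~5.5]{shao}, which goes through verbatim once the infimum in clause two is properly treated.
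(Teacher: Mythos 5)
Your proposal is correct and is essentially the argument the paper intends: the paper gives no proof of this lemma, deferring entirely to the cited Theorem~5.5 of \cite{shao}, and your plan (define the extension by the support formula, check well-definedness of the first clause via the isosceles principle, use the greatest-lower-bound hypothesis on $R$ for the second clause, and verify the Kat\v{e}tov inequalities by cases on which clause applies to $x$ and $y$) reconstructs exactly that argument. The case analyses you defer all close as you expect, so there is no gap to report.
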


If $X$ is an $R$-ultrametric space then the set $E(X)$ of Kat\v{e}tov functions on $X$ with a support of size $<\kappa$ is naturally an $R$-ultrametric space (with $d(f,g)=\max(f(x),g(x))$ where $x$ is such that $f(x)\neq g(x)$).  Moreover, identifying each $x\in X$ with the function $f_x(z)=d(x,z)$ gives a natural embedding of $X$ as a subspace of $E(X)$.  The next lemma states that assuming $\kappa^{<\kappa}=\kappa$, if $X$ has density $\leq\kappa$, then so does $E(X)$.

\begin{lem}[cf.\ \protect{\cite[Theorem~5.9]{shao}}]
  Assume as usual that $\kappa^{<\kappa}=\kappa$.  If $D$ is a dense subset of $X$ of size $\leq\kappa$, let $E(X,D)$ denote the subspace of $E(X)$ consisting of Kat\v{e}tov functions with a support contained in $D$.  Then $E(X)\smallsetminus X\subset E(X,D)$ and hence has size $\leq\kappa$.
\end{lem}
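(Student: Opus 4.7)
The plan is to first establish the inclusion $E(X) \smallsetminus X \subset E(X,D)$ by a density-approximation argument, and then to bound $|E(X,D)|$ by counting supports and value assignments.

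For the inclusion, fix $f \in E(X) \smallsetminus X$, so $f$ is Kat\v{e}tov with some support $Z \subset X$ of size $<\kappa$ and $f \neq f_x$ for every $x \in X$. The first step is to show that $f$ never vanishes on $X$: if $f(x_0) = 0$, the Kat\v{e}tov inequalities $f(y) \leq \max(d(x_0,y), f(x_0)) = d(x_0,y)$ and $d(x_0,y) \leq \max(f(x_0), f(y)) = f(y)$ would force $f = f_{x_0}$, identifying $f$ with an element of $X$ and contradicting $f\notin X$. Hence $f(z) > 0$ for every $z \in Z$, and by density of $D$ in $X$, for each such $z$ one can pick $d_z \in D$ with $d(z, d_z) < f(z)$. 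A short argument from the Kat\v{e}tov inequalities together with the strict inequality $d(z, d_z) < f(z)$ forces $f(d_z) = f(z)$. I then claim that $Z' = \{d_z : z \in Z\} \subset D$ is again a support for $f$, which would give $f \in E(X, D)$. To verify this, given $x \in X$, the ultrametric inequality $d(d_z, x) \leq \max(d(d_z, z), d(z, x))$ combined with $d(d_z, z) < f(z)$ shows that the dichotomy ``$\exists z \in Z$ with $f(z) \neq d(z, x)$'' versus ``$\forall z \in Z$, $f(z) = d(z, x)$'' is preserved upon passing from $Z$ to $Z'$, and that the support formulas $\max(f(z), d(z,x))$ and $\inf_{z\in Z} f(z)$ yield the same values when $z$ is replaced by $d_z$.

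For the counting, a function in $E(X, D)$ is determined by a support $Z' \subset D$ with $|Z'|<\kappa$ together with the restriction $f|_{Z'}$. Since $|D| \leq \kappa$ and $\kappa^{<\kappa} = \kappa$, there are at most $\kappa$ possible supports; for each such $Z'$ there are at most $|R|^{|Z'|} \leq \kappa^{<\kappa} = \kappa$ value assignments $Z' \to R$ (using the ambient assumption $|R| \leq \kappa$, which is implicit in the context of Urysohn spaces of density $\leq\kappa$). Therefore $|E(X,D)| \leq \kappa\cdot\kappa = \kappa$, and the bound $|E(X)\smallsetminus X|\leq\kappa$ follows from the inclusion just established.

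The main obstacle is the verification that $Z'$ supports $f$: one must show that the dichotomy built into the support definition transfers between $Z$ and $Z'$, and that both max and inf formulas give the same value on either side. This relies on the strict ultrametric inequality in its ``two largest distances are equal'' form, applied to the triple $(z, d_z, x)$ together with the strict inequality $d(z, d_z) < f(z)$, and requires splitting into the subcases $f(z) > d(z,x)$, $f(z) < d(z,x)$, and $f(z) = d(z,x)$.
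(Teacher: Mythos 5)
Your argument is correct. The paper itself gives no proof of this lemma (it defers to the cited reference and only outlines the surrounding construction), and your proof is the natural one: the reduction to the second clause of the support definition is legitimate, since for \emph{any} Kat\v{e}tov function the identity $f(x)=\max(f(z),d(z,x))$ holds automatically whenever $f(z)\neq d(z,x)$, so the only content of ``$Z'$ supports $f$'' is the $\inf$ clause, and your observation that $d(z,d_z)<f(z)=f(d_z)$ forces $d(z,x)=d(d_z,x)$ whenever either equals the corresponding $f$-value transfers that clause between $Z$ and $Z'$ exactly as you describe. Your flag that the counting step needs $\abs{R}\leq\kappa$ is a fair one: this hypothesis is not stated explicitly in the paper's Theorem~\ref{thm:urysohn}, but it is forced anyway if a Urysohn space of density $\leq\kappa$ is to exist (every value of $R$ is realized as a distance in $U_R$), so treating it as ambient is appropriate.
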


Given any $R$-ultrametric space $X_0$ of density $\leq\kappa$, we now let $X_{\alpha+1}=E(X_\alpha)$, and $X_\lambda=\bigcup_{\alpha<\lambda}X_\alpha$ for $\lambda$ limit.  It follows easily from the above results that $X_\kappa$ satisfies the extension property.  While it may not be complete, it follows from \cite[Proposition 2.4]{shao} that its completion will then be an $R$-ultrametric Urysohn space.  This concludes the outline of the proof of Theorem~\ref{thm:urysohn}.

As in the classical case, an argument due to Uspenskij shows that the group of isometries of the Urysohn space is universal among isometry groups.

\begin{cor}
  If $X$ is an $R$-ultrametric space with density $\leq\kappa$ then there is a continuous embedding from $\Iso(X)$ into $\Iso(U_R)$.
\end{cor}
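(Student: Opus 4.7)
The plan is to adapt Uspenskij's classical construction via the functoriality of the Kat\v{e}tov extension $E$. For any $R$-ultrametric space $Y$ and any $g\in\Iso(Y)$, I would define $E(g)\colon E(Y)\to E(Y)$ by $E(g)(f)=f\circ g^{-1}$. If $f$ is a Kat\v{e}tov function supported on some $Z\subset Y$ with $|Z|<\kappa$, then $f\circ g^{-1}$ is again Kat\v{e}tov and is supported on $g(Z)$, which still has size $<\kappa$. A short check using the metric formula $d(f_1,f_2)=\max(f_1(x),f_2(x))$ shows that $E(g)$ is an isometry; the assignment $g\mapsto E(g)$ is a group homomorphism; and under the canonical embedding $y\mapsto f_y$ the isometry $E(g)$ extends $g$, because $f_y\circ g^{-1}=f_{g(y)}$.

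With this functoriality in hand, I iterate through the Kat\v{e}tov tower used to build $U_R$. Starting from $X_0=X$ and a given $g\in\Iso(X)$, I set $g_0=g$ and inductively define $g_{\alpha+1}=E(g_\alpha)\in\Iso(X_{\alpha+1})$, with $g_\lambda=\bigcup_{\alpha<\lambda}g_\alpha$ at limit stages. Each $g_\alpha$ extends its predecessors, so the procedure produces $g_\kappa\in\Iso(X_\kappa)$, which I then extend uniquely by uniform continuity to an isometry $\tilde g\in\Iso(U_R)$ of the completion. By functoriality, $g\mapsto\tilde g$ is a group homomorphism; it is injective because $\tilde g\upharpoonright X=g$; and the same identity $\tilde g\upharpoonright X=g$ shows that its inverse on the image is continuous, so once continuity of $g\mapsto\tilde g$ itself is established we will have a topological embedding.

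It remains to verify that $g\mapsto\tilde g$ is continuous in the topology of pointwise convergence. Since $X_\kappa$ is dense in $U_R$, it suffices to check that $g\mapsto g_\alpha(u)$ is continuous for each $\alpha<\kappa$ and each $u\in X_\alpha$, which I would prove by transfinite induction on $\alpha$. At a successor step, a Kat\v{e}tov function $f$ supported on some $Z\subset X_\alpha$ of size $<\kappa$ has $g_{\alpha+1}(f)$ completely determined by $g_\alpha\upharpoonright Z$ via the support formula, so the inductive hypothesis immediately gives what we need. The step I expect to require the most care is the limit-stage book-keeping, where one must verify that pointwise closeness of $g$ and $g'$ on sets of size $<\kappa$ propagates through the tower to closeness of $g_\lambda(u)$ and $g'_\lambda(u)$; this should follow from the regularity of $\kappa$ together with the fact that every element of $X_\kappa$ already appears at some stage $\alpha<\kappa$, supported by a set present there.
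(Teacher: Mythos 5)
Your proposal is correct and follows essentially the same route the paper intends: the paper gives no explicit proof, simply invoking ``an argument due to Uspenskij'' together with its Kat\v{e}tov-tower construction of $U_R$, and your functorial extension $g\mapsto E(g)$ propagated through the tower $X_{\alpha+1}=E(X_\alpha)$ (starting from $X_0=X$) and then to the completion is exactly that argument. Your continuity check via hereditary supports of size $<\kappa$ and the regularity of $\kappa$ supplies the detail the paper leaves implicit.
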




We remark that the $R$-ultrametric Urysohn spaces even satisfy the extension property for $\kappa$-compact subsets. To see this, cover a given compact set with a family of small open balls and choose a subcover of size $<\kappa$. Then apply the extension property to the set of centers of these balls.


We conclude with the question of whether there exists a Urysohn-type space for a broader class of spaces than just the $\kappa$-ultrametric spaces. For example, the space ${}^\kappa\kappa$ with the lexicographic topology is an important space that is not even $\kappa$-additive.

\begin{question}
  Is there a homogeneous strong $\kappa$-Choquet space of weight $\leq\kappa$ which contains a copy of $({}^{\kappa}\kappa,\mathsf{lex})$?
\end{question}

Here ``homogeneous'' can mean having the extension property, or for a simpler question it can mean that for any $x,y$ there is a homeomorphism mapping $x$ to $y$.

\section{Hereditarily Baire spaces}
\label{section hereditarily baire}

In this section we, we characterize the subspaces $X$ of ${}^{\kappa}\kappa$ such that every spherically closed (see below) subset of $X$ is $\kappa$-Baire.  This extends a result of Debs for separable spaces, see \cite[Theoreme~3.2]{debs}.

\begin{defn} 
  \begin{enumerate} 
  \item A $\kappa$-ultrametric space is \emph{spherically ($\mu$-, $\mathord{<}\mu$-, $\mathord{\leq}\mu$-) complete} if the intersection of every decreasing sequence (of length $\mu$, $\mathord{<}\mu$, $\mathord{\leq}\mu$) of open balls is nonempty.
  \item A subset $A$ of a $\kappa$-ultrametric space $X$ is \emph{spherically ($\mu$-, $\mathord{<}\mu$-, $\mathord{\leq}\mu$-) closed} in $X$ if whenever $B_\alpha$ is a decreasing sequence of open balls (of length $\mu$, $\mathord{<}\mu$, $\mathord{\leq}\mu$) such that each $B_\alpha$ meets $A$ and $\bigcap B_\alpha\neq\emptyset$ then $\bigcap B_\alpha\cap A\neq\emptyset$.
  \item A $\kappa$-ultrametric space $X$ is \emph{hereditarily (weakly) $\kappa$-Baire} if every spherically closed subset of $X$ is (weakly) $\kappa$-Baire.
  \end{enumerate} 
\end{defn} 

We begin with several remarks on these definitions.  It is clear that every $\mathord{\leq}\kappa$-spherically complete space is strong $\kappa$-Choquet.  In fact, $X$ is spherically $\mathord{\leq}\kappa$-complete if and only if player~II wins every run of the version of the weak $\kappa$-Choquet game in which both players can only play open balls.

There are surprising examples of spherically closed subsets of
${}^\kappa\kappa$.  For example, by \cite{luecke-schlicht-kurepa-trees} it is
consistent that there is a $\mathord{<}\kappa$-closed $\kappa$-Kurepa
tree $T\subset {}^{<\kappa}\kappa$ for arbitrarily large $\kappa$.
It follows that $[T]$ is spherically closed for such $T$; in fact, a
subset $A\subset{}^{\kappa}\kappa$ is spherically
$\mathord{<}\kappa$-closed in ${}^\kappa\kappa$ iff the tree
$T=\set{s\in {}^{<\kappa}\kappa\mid (\exists x\in A)\ s\subset x}$
is $\mathord{<}\kappa$-closed.

It also follows from this last fact that ${}^\kappa\kappa$ is an example of a hereditarily $\kappa$-Baire space.  But there do exist $\kappa$-Baire spaces $X$ which are not hereditarily $\kappa$-Baire.  For example, let $Q\subset{}^{\kappa}\kappa$ be a dense subset of size $\kappa$.  We shall define a space $X$ as a union of copies $X_q$ of ${}^\kappa\kappa$ for $q\in Q$.  First, realize $Q$ as a subspace of $X$ by identifying $q\in Q$ with $0^\kappa\in X_q$.  Then, extend the usual ultrametrics on $Q$ and $X_q$ to all of $X$ by $d(x,x')=\max(d(x,q),d(q,q'),d(q',x'))$ for $x\in X_q$ and $x'\in X_{q'}$ and $q\neq q'$.  Now, $X$ is a $\kappa$-Baire $\kappa$-ultrametric space of density $\leq\kappa$.  And $Q$ is a $\kappa$-perfect subset of $X$ which is is spherically closed in $X$, but $Q$ is not $\kappa$-Baire.

The following result shows that the example in the previous paragraph is in some sense typical of spaces which are $\kappa$-Baire but not hereditarily $\kappa$-Baire.

\begin{thm}
  \label{characterization of hereditarily Baire}
  If $X\subset {}^{\kappa}\kappa$, then the following are equivalent:
  \begin{enumerate} 
  \item $X$ is not hereditarily $\kappa$-Baire.
  \item There is a subset $Q\subset X$ such that $Q$ is spherically closed in $X$, $Q$ has size $\kappa$, and $Q$ has no $\kappa$-isolated points.
  \end{enumerate} 
\end{thm}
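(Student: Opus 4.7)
The plan is to handle (b) $\Rightarrow$ (a) easily and prove (a) $\Rightarrow$ (b) via a generalized Cantor-scheme construction.

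For (b) $\Rightarrow$ (a), $Q$ itself is the required spherically closed subset of $X$ that fails to be $\kappa$-Baire. Enumerating $Q = \{q_\alpha : \alpha<\kappa\}$, each $\{q_\alpha\}$ is closed in Hausdorff $Q$ and has empty interior, since no $\kappa$-isolated points implies in particular no isolated points. Hence each $V_\alpha := Q \smallsetminus \{q_\alpha\}$ is open dense in $Q$, but $\bigcap_{\alpha<\kappa} V_\alpha = \emptyset$ is not dense in $Q$.

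For (a) $\Rightarrow$ (b), let $Y \subseteq X$ be spherically closed in $X$ and not $\kappa$-Baire, and fix decreasing dense open $U_\alpha \subseteq Y$ with $\bigcap_\alpha U_\alpha$ not dense. I restrict to a basic open ball $B$ with $B \cap \bigcap U_\alpha = \emptyset$ and replace $Y$ by $Y \cap B$, which remains spherically closed in $X$; discarding the at most $\kappa$ many $\kappa$-isolated points, I may further assume $Y$ is $\kappa$-perfect and $\bigcap U_\alpha = \emptyset$ in $Y$. I then construct by recursion a $\kappa$-branching Cantor scheme indexed by $s \in {}^{<\kappa}\kappa$, choosing a node $t_s \in {}^{<\kappa}\kappa$ and a point $y_s \in N_{t_s} \cap Y$. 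At a successor step $s \mapsto s^\smallfrown\alpha$, I pick $y_{s^\smallfrown\alpha} \in Y \cap U_{\dom(s)}$ agreeing with $y_s$ on the first $\dom(t_s) + \alpha$ coordinates but distinct from $y_s$ (possible by $\kappa$-perfectness of $Y$ and density of $U_{\dom(s)}$), and let $t_{s^\smallfrown\alpha}$ be an initial segment of $y_{s^\smallfrown\alpha}$ long enough that $N_{t_{s^\smallfrown\alpha}} \cap Y \subseteq U_{\dom(s)}$ and the $t_{s^\smallfrown\alpha}$'s are mutually incompatible. At a limit $\lambda < \kappa$, set $t_s := \bigcup_{\beta<\lambda} t_{s\upharpoonright\beta}$ and use spherical closedness of $Y$ in $X$ to pick $y_s \in N_{t_s} \cap Y$ whenever $N_{t_s} \cap X \ne \emptyset$.

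Setting $Q := \{y_s : s \in S\}$ where $S \subseteq {}^{<\kappa}\kappa$ is the subtree of indices for which the recursion survives, $|Q| = \kappa$ and $Q$ has no $\kappa$-isolated points (for each $y_s$ and $\beta < \kappa$, the successor $y_{s^\smallfrown\alpha}$ with $\dom(t_s) + \alpha \geq \beta$ is distinct from $y_s$ and agrees with it on the first $\beta$ coordinates). For spherical closedness of $Q$ in $X$, I would show the tree $T_Q$ of initial segments of elements of $Q$ is $\mathord<\kappa$-closed by a stabilization argument on witnessing indices using regularity of $\kappa$, and that every branch of $T_Q$ is either some $y_s$ or a limit branch $\tau_b := \bigcup_\alpha t_{b\upharpoonright\alpha}$ for some $b \in [S]$. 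Such a limit branch lies in every $U_\alpha$ by the avoidance property $N_{t_{b\upharpoonright\alpha+1}} \cap Y \subseteq U_\alpha$, so $\tau_b \in Y$ would force $\tau_b \in \bigcap U_\alpha \cap Y = \emptyset$; hence $\tau_b \notin Y$. Spherical closedness of $Y$ in $X$ (yielding $[T_Y] \cap X = Y$) then gives $\tau_b \notin X$, so $[T_Q] \cap X = Q$ as required.

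The main obstacle is the limit-stage book-keeping: one must verify that enough branches of ${}^{<\kappa}\kappa$ survive so that $|S| \geq \kappa$ and every surviving $y_s$ admits the $\kappa$-many successors needed for perfectness. The ``agree on more coordinates'' recipe makes the sequences $(y_{s\upharpoonright\beta})_\beta$ along each branch Cauchy in ${}^\kappa\kappa$ so their limits exist; the delicate point is arguing that these limits remain in $X$ at limit levels below $\kappa$ (keeping paths alive) while necessarily leaving $X$ at level $\kappa$ (so $[T_Q] \cap X = Q$), which is precisely what the $U_\alpha$-avoidance together with $Y$'s spherical closedness in $X$ delivers.
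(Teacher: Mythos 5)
Your overall architecture matches the paper's: (b)$\Rightarrow$(a) is the same easy argument, and for (a)$\Rightarrow$(b) both you and the paper pass to a spherically closed $Y$ that is not weakly $\kappa$-Baire and build a $\kappa$-branching scheme of points of $Y$ driven by the dense open sets (the paper phrases this via a winning strategy for player~I from Proposition~\ref{characterization of Baire by games}, but in this direction that strategy is essentially ``intersect with the next $U_\alpha$'', so the difference is cosmetic). Your endgame is also right: $\kappa$-length branches land in $\bigcap_\alpha U_\alpha=\emptyset$, hence outside $Y$, hence outside $X$ by spherical closedness of $Y$. One small point first: ``discarding the $\kappa$-isolated points'' is not a legitimate move as stated, since the set of non-$\kappa$-isolated points of a spherically closed set need not be spherically closed (compare the paper's remark before Proposition~\ref{prop:injection}). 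You are saved only because the step is vacuous: every isolated point of $Y$ lies in every dense open subset of $Y$, hence in $\bigcap_\alpha U_\alpha$, so once you restrict to the ball $B$ with $B\cap\bigcap_\alpha U_\alpha=\emptyset$ the space $Y\cap B$ already has no isolated points. Argue it that way rather than by deleting points.

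The genuine gap is in the spherical closedness of $Q$, which is where all the work lies. Requiring $y_{s^\smallfrown\alpha}$ to agree with $y_s$ on the first $\dom(t_s)+\alpha$ coordinates and the $t_{s^\smallfrown\alpha}$ to be pairwise incompatible only bounds the splitting point of $y_{s^\smallfrown\alpha}$ from $y_s$ from below; it does not prevent two siblings $y_{s^\smallfrown\alpha}$ and $y_{s^\smallfrown\beta}$ from agreeing with \emph{each other} far beyond the point where either agrees with $y_s$. For instance the points $y_{s^\smallfrown n}$ ($n<\omega$) could all pass through a common node $u$ of limit length that is not an initial segment of $y_s$ and that no member of $Q$ extends; then $N_u$ meets $Q$ cofinally but $N_u\cap Q=\emptyset$, even though $N_u\cap X$ may be nonempty, and $Q$ is not spherically closed. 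Consequently your claimed classification of the branches of $T_Q$ (``either some $y_s$ or a limit branch $\tau_b$'') and the deferred ``stabilization argument'' both fail for the construction as specified. The paper blocks exactly this by demanding $r_{t^\smallfrown\alpha}\not\subset x_t$ and that the meets $(r_{t^\smallfrown\alpha}\cap x_t)_{\alpha<\kappa}$ be \emph{strictly increasing}, which forces the common initial segment of any two siblings' points to be an initial segment of the parent's point; that is precisely what Claims~\ref{claim: strictly increasing sequence in T} and~\ref{claim: branch follows the strategy} use. The repair is routine---choose the splitting points recursively in $\alpha$ so that each exceeds the supremum of the previous ones---but as written the decisive step of your proof is both underspecified and unproved.
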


We remark that $X\subset{}^\kappa\kappa$ is hereditarily $\kappa$-Baire if and only if it is hereditarily weakly $\kappa$-Baire, since basic open subsets of ${}^\kappa\kappa$ are spherically closed.  It is also worth noting that the result could also be stated for $X$ a $\kappa$-ultrametric space of density $\leq\kappa$, since any such space is homeomorphic to a subspace of ${}^{\kappa}\kappa$.

\begin{proof}[Proof of Theorem~\ref{characterization of hereditarily Baire}]
  (b)$\to$(a) Clearly if there is such a subspace $Q$, then $Q$ is not $\kappa$-Baire and hence $X$ is not hereditarily $\kappa$-Baire.

  (a)$\to$(b) Suppose that $X$ is not weakly hereditarily $\kappa$-Baire.  Let $Y\subset X$ be a spherically closed subset of $X$ which is not weakly $\kappa$-Baire. Then by Theorem~\ref{characterization of Baire by games}, player~I has a winning strategy $\sigma$ in the modified version of the basic $\kappa$-Choquet game for $Y$ where player~II begins and plays at limits but the winning condition remains the same. Recall that $N_s$ denotes the basic open subset ${}^{\kappa}\kappa$ with stem $s$; we will write $U_s= N_s\cap Y$ in this proof. 


  We construct a tree $T\subset {}^{<\kappa}\kappa$ and $(r_t,s_t,x_t)_{t\in T}$ such that $r_t,s_t\in {}^{<\kappa}\kappa$, $x_t\in Y$ and for all $t\in T$:
\begin{itemize}
\item $U_{s_t}\neq\emptyset$;
\item $x_t\in U_{s_t}$ and $x_t$ is nonisolated in $Y$;
\item for every branch $b$ in $T$ of length $\gamma$, the sequence $(U_{r_{b\upharpoonright\alpha}},U_{s_{b\upharpoonright\alpha}})_{\alpha<\gamma}$ is a valid run according to $\sigma$; and
\item if $t\in T$, then $t^\smallfrown\alpha\in T$ for all $\alpha<\kappa$.
\end{itemize} 
To begin, let $r_{\emptyset}=\emptyset$ and choose $s_{\emptyset}$ so that $U_{s_{\emptyset}}$ is the answer of $\sigma$ to $U_{r_{\emptyset}}$. Since $\sigma$ is a winning strategy for player~I, $U_{s_{\emptyset}}$ cannot have any isolated (equivalently $\kappa$-isolated) points, since otherwise player~II could win by playing this singleton point.  In particular, we may let $x_{\emptyset}\in U_{s_{\emptyset}}$ be nonisolated in $Y$.

Next suppose that $t\in T$ and that $r_{t'},s_{t'},x_{t'}$ are defined for all $t'\subset t$.  Since $x_t\in U_{s_t}$ is nonisolated in $Y$, can find a sequence $(r_{t^\smallfrown\alpha})_{\alpha<\kappa}$ such that $s_t\subset r_{t^\smallfrown\alpha}$, $r_{t^\smallfrown\alpha}\not\subset x_t$, the intersections $(r_{t^\smallfrown\alpha}\cap x_t)_{\alpha<\kappa}$ form a strictly increasing sequence in ${}^{<\kappa}\kappa$, and $U_{r_{t^\smallfrown\alpha}}$ is nonempty for all $\alpha<\kappa$.  We then choose $s_{t^\smallfrown\alpha}$ such that $U_{s_{t^\smallfrown\alpha}}$ is the answer of $\sigma$ to the run beginning with the sets $U_{r_{t'}}, U_{s_{t'}}$ for $t'\subset t$ and ending with $U_{r_{t^\smallfrown\alpha}}$.  Again, since $\sigma$ is a winning strategy for player~I, we may choose $x_{t^\smallfrown\alpha}\in U_{s_{t^\smallfrown\alpha}}$ nonisolated in $Y$ for all $\alpha<\kappa$.

  Finally suppose that $t\in {}^{<\kappa}\kappa$ has limit length, that $t'\in T$ for all $t'\subsetneq t$, and that $r_{t'},s_{t'}$ are defined for all $t'\subsetneq s$.  Let $s=\bigcup_{t'\subsetneq t} s_{t'}$. If $U_s$ is empty, let $t\notin T$.  If $U_s$ is nonempty, let $t\in T$ and $r_t=s$.  Then choose $s_t$ such that $U_{s_t}$ is the answer of $\sigma$ to the run beginning with the sets $U_{r_{t'}}, U_{s_{t'}}$ for $t'\subset t$ in $S$ and ending with $U_{r_t}$.  Again, there is some $x_t\in U_{s_t}$ nonisolated in $Y$, since $\sigma$ is a winning strategy for player~I. This completes the construction.

  Now, we let $Q=\set{x_t\mid t\in T}$.  The successor stages of the construction ensure that $Q$ has no isolated points, and equivalently no $\kappa$-isolated points, and hence that $Q$ is $\kappa$-perfect.  Thus it remains only to show that $Q$ is spherically closed in $Y$, and hence in $X$.  We begin by showing that $Q$ is spherically $\mathord{<}\kappa$-closed in $Y$.  In other words, letting $S=\set{x_t\upharpoonright\alpha:t\in T,\,\alpha<\kappa}$, $(s_{\alpha})_{\alpha<\delta}$ a strictly increasing sequence in $S$ (with $\delta<\kappa$ regular), and $u=\bigcup_{\alpha<\delta} s_{\alpha}$, we must show that if $U_u\neq\emptyset$ then $U_u$ contains an element of $Q$.
  
  To begin, we know that for each $\alpha<\delta$, since $s_{\alpha}\in S$ there is some $x\in Q$ with $s_{\alpha}\subset x$. Let $t_{\alpha}\in T$ be minimal such that $s_{\alpha}\subset x_{t_{\alpha}}$. Passing to a subsequence if necessary, we can assume that $s_{\beta}\not\subset x_{t_{\alpha}}$ whenever $\alpha<\beta<\delta$.
  
  \begin{claim}\label{claim: strictly increasing sequence in T}
    $t_{\alpha}\subsetneq t_{\beta}$ whenever $\alpha<\beta<\delta$.
  \end{claim}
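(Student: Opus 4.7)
The plan is to argue by contradiction, ruling out each of the three alternatives to $t_\alpha \subsetneq t_\beta$: the equality $t_\alpha = t_\beta$, the reverse inclusion $t_\beta \subsetneq t_\alpha$, and the case in which $t_\alpha$ and $t_\beta$ are incomparable in $T$.

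The first two cases are immediate. If $t_\alpha = t_\beta$, then $s_\beta \subset x_{t_\beta} = x_{t_\alpha}$, contradicting the arrangement $s_\beta \not\subset x_{t_\alpha}$. If $t_\beta \subsetneq t_\alpha$, then $s_\alpha \subsetneq s_\beta \subset x_{t_\beta}$ exhibits $t_\beta$ as a strictly smaller $t \in T$ with $s_\alpha \subset x_t$, contradicting the minimality of $t_\alpha$.

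The main obstacle is the incomparable case, for which we must exploit the combinatorial structure of the construction. Let $t^* = t_\alpha \cap t_\beta \in T$ be the longest common initial segment, and choose $\gamma \neq \gamma'$ such that $t^*{}^\smallfrown \gamma \subseteq t_\alpha$ and $t^*{}^\smallfrown \gamma' \subseteq t_\beta$. For each $\delta < \kappa$, set $m_\delta := r_{t^*{}^\smallfrown \delta} \cap x_{t^*}$; by the construction the sequence $(m_\delta)_{\delta < \kappa}$ is strictly increasing in length. I would first establish by induction along $T$ that for every $t' \in T$ extending $t^*{}^\smallfrown \delta$, one has $x_{t'} \cap x_{t^*} = m_\delta$. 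The inductive step uses only that any such $x_{t'}$ extends $s_{t^*{}^\smallfrown \delta}$, which in turn extends $r_{t^*{}^\smallfrown \delta}$ and hence both extends $m_\delta$ and disagrees with $x_{t^*}$ at position $|m_\delta|$; the successor and limit cases rest on the same observation.

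Applying this to $t_\alpha$ and $t_\beta$ gives $x_{t_\alpha} \cap x_{t^*} = m_\gamma$ and $x_{t_\beta} \cap x_{t^*} = m_{\gamma'}$. Without loss of generality $|m_\gamma| < |m_{\gamma'}|$, so $m_\gamma \subsetneq m_{\gamma'}$; then $x_{t_\beta}$ agrees with $x_{t^*}$ at position $|m_\gamma|$, while $x_{t_\alpha}$ disagrees there, forcing $x_{t_\alpha} \cap x_{t_\beta} = m_\gamma \subset x_{t^*}$. Since $s_\alpha \subset x_{t_\alpha}$ and $s_\alpha \subsetneq s_\beta \subset x_{t_\beta}$, it follows that $s_\alpha \subseteq m_\gamma \subset x_{t^*}$, so that $t^* \subsetneq t_\alpha$ is a strictly smaller $t \in T$ with $s_\alpha \subset x_t$, once more contradicting minimality.
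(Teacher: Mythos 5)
Your proof is correct and follows essentially the same route as the paper: the paper also passes to the meet $t$ of $t_\alpha$ and $t_\beta$, observes that by the construction the common initial segment of $x_{t_\alpha}$ and $x_{t_\beta}$ lies along $x_t$, and concludes $s_\alpha\subset x_t$, contradicting minimality unless $t_\alpha=t\subseteq t_\beta$. Your explicit verification via the sequence $m_\delta=r_{t^{*\smallfrown}\delta}\cap x_{t^*}$ simply fills in the step the paper dispatches with ``by the construction.''
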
 
  
  \begin{claimproof}
  Let $\gamma=\sup\set{\delta<\kappa\mid t_{\alpha}\upharpoonright \delta=t_{\beta}\upharpoonright\delta}$ and $t=t_{\alpha}\upharpoonright\gamma=t_{\beta}\upharpoonright\gamma$. Let $\zeta=\sup\set{\delta<\kappa\mid x_{t_{\alpha}}\upharpoonright \delta= x_{t_{\beta}}\upharpoonright\delta}$. Then $x_t\supset x_{t_{\alpha}}\upharpoonright \zeta= x_{t_{\beta}}\upharpoonright \zeta$ by the construction. Since $s_{\alpha}\subset x_{t_{\alpha}}$ and $s_{\alpha}\subset x_{t_{\beta}}$, this implies that $s_{\alpha}\subset x_t$. Since $t_{\alpha}$ is minimal with $s_{\alpha}\subset x_{t_{\alpha}}$, this implies that $t_{\alpha}=t$.  Thus $t_{\alpha}\subset t_{\beta}$, and since $x_{t_{\alpha}}\neq x_{t_{\beta}}$ we must have $t_{\alpha}\subsetneq t_{\beta}$.
  \end{claimproof}
  
  \begin{claim}\label{claim: branch follows the strategy} $u=\bigcup_{\alpha<\delta} s_{\alpha}=\bigcup_{\alpha<\delta} r_{t_{\alpha}}=\bigcup_{\alpha<\delta} s_{t_{\alpha}}$. 
  \end{claim}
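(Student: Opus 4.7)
The first equality is the definition of $u$. I plan to establish the remaining two equalities separately, beginning with the routine $\bigcup r_{t_\alpha}=\bigcup s_{t_\alpha}$ and then the more substantive $\bigcup s_\alpha=\bigcup s_{t_\alpha}$ (call this latter union $v$).

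For $\bigcup r_{t_\alpha}=\bigcup s_{t_\alpha}$, I will combine Claim~\ref{claim: strictly increasing sequence in T} with the tree construction. Since $t_\alpha\subsetneq t_\beta$ for $\alpha<\beta$, walking along the tree path from $t_\alpha$ to $t_\beta$ and using the successor-case definition $r_{t^\smallfrown\gamma}\supsetneq s_t$, the limit-case definition $r_t=\bigcup_{t'\subsetneq t} s_{t'}$, and the in-round containment $r_t\subset s_t$, I obtain $r_{t_\alpha}\subset s_{t_\alpha}\subset r_{t_\beta}\subset s_{t_\beta}$, so the two chains interleave and share a common union.

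For the inclusion $v\subset u$, I will show $s_{t_\alpha}\subset s_{\alpha+1}$. Since $t_\alpha\subsetneq t_{\alpha+1}$, both $x_{t_\alpha}$ and $x_{t_{\alpha+1}}$ contain $s_{t_\alpha}$ as a common prefix. The subsequence-passage assumption gives $s_{\alpha+1}\not\subset x_{t_\alpha}$, while $s_{\alpha+1}\subset x_{t_{\alpha+1}}$; hence the first position where $s_{\alpha+1}$ disagrees with $x_{t_\alpha}$ must lie at $\geq |s_{t_\alpha}|$, which forces both $|s_{\alpha+1}|>|s_{t_\alpha}|$ and $s_{t_\alpha}\subset s_{\alpha+1}$ (since both are then prefixes of $x_{t_{\alpha+1}}$ with the first shorter).

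For the key reverse inclusion $u\subset v$, I will show $s_\alpha\subset s_{t_{\alpha+1}}$. Because $s_\alpha\subset s_{\alpha+1}\subset x_{t_{\alpha+1}}$ and $s_\alpha\subset x_{t_\alpha}$, both $x_{t_\alpha}$ and $x_{t_{\alpha+1}}$ share $s_\alpha$ as a prefix, so their first point of disagreement $\eta$ satisfies $\eta\geq|s_\alpha|$. On the other hand, letting $t_\alpha^\smallfrown\gamma\subseteq t_{\alpha+1}$ denote the immediate child of $t_\alpha$ on the path to $t_{\alpha+1}$, the construction chose $r_{t_\alpha^\smallfrown\gamma}$ so as to disagree with $x_{t_\alpha}$ at some position $\zeta<|r_{t_\alpha^\smallfrown\gamma}|\leq|s_{t_\alpha^\smallfrown\gamma}|\leq|s_{t_{\alpha+1}}|$, and this disagreement is inherited by $x_{t_{\alpha+1}}$ via the tree chain $s_{t_\alpha^\smallfrown\gamma}\subset s_{t_{\alpha+1}}\subset x_{t_{\alpha+1}}$. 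Thus $\eta\leq\zeta<|s_{t_{\alpha+1}}|$, giving $|s_\alpha|\leq\eta<|s_{t_{\alpha+1}}|$, and since $s_\alpha$ and $s_{t_{\alpha+1}}$ are both prefixes of $x_{t_{\alpha+1}}$, this yields $s_\alpha\subset s_{t_{\alpha+1}}\subset v$. The main obstacle will be verifying that the disagreement introduced at the first split $t_\alpha^\smallfrown\gamma$ is genuinely detected inside $s_{t_{\alpha+1}}$, so that the bound $\eta<|s_{t_{\alpha+1}}|$ is actually available.
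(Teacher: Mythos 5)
Your proposal is correct and follows essentially the same route as the paper: the paper's proof consists precisely of the two interleaving chains $s_\alpha\subset r_{t_\beta}\subset s_{t_\beta}$ and $r_{t_\alpha}\subset s_{t_\alpha}\subset s_\beta$ (for $\alpha<\beta$), which you verify in detail for $\beta=\alpha+1$ using the minimality of $t_\alpha$, the subsequence assumption $s_\beta\not\subset x_{t_\alpha}$, and the construction's requirement $r_{t^\smallfrown\gamma}\not\subset x_t$. The ``main obstacle'' you flag at the end is already resolved by your own argument, since the disagreement at $\zeta<\lvert r_{t_\alpha^\smallfrown\gamma}\rvert$ propagates along $r_{t_\alpha^\smallfrown\gamma}\subset s_{t_{\alpha+1}}\subset x_{t_{\alpha+1}}$.
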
 
  
  \begin{claimproof}
    We have $s_{\alpha}\subset r_{t_{\beta}}\subset s_{t_{\beta}}$ and $r_{t_{\alpha}}\subset s_{t_{\alpha}}\subset s_{\beta}$ for all $\alpha<\beta<\delta$ by the construction of $T$.   
  \end{claimproof}
  
  Now, let $v=\bigcup_{\alpha<\delta} t_{\alpha}$. Then by the construction, if $U_u\neq\emptyset$ we will have that $v\in T$ and $x_v\in U_u$. This concludes the proof that $Q$ is spherically $\mathord{<}\kappa$-closed in $Y$.
    
  Finally, we show that $Q$ is spherically $\kappa$-closed in $Y$.  Suppose that $(s_{\alpha})_{\alpha<\kappa}$ is strictly increasing in $S$, and as before let $t_{\alpha}\in T$ be minimal such that $s_{\alpha}\subset x_{t_{\alpha}}$. Again assume that $s_{\beta}\not\subset x_{t_{\alpha}}$ for all $\alpha<\beta<\kappa$. Then by the proof of Claim~\ref{claim: strictly increasing sequence in T}, $t_{\alpha}\subsetneq t_{\beta}$ if $\alpha<\beta<\kappa$.  And by the proof of Claim~\ref{claim: branch follows the strategy}, we have $s_{\alpha}\subset r_{t_{\beta}}\subset s_{t_{\beta}}$ and $r_{t_{\alpha}}\subset s_{t_{\alpha}}\subset s_{\beta}$ for all $\alpha<\beta<\kappa$. Since $(r_{t_{\alpha}})_{\alpha<\kappa}$ is in accordance with a run of the winning strategy $\sigma$ for player~I, we have $\bigcup_{\alpha<\kappa}s_{\alpha}=\bigcup_{\alpha<\kappa} r_{t_{\alpha}}\notin Y$. This concludes the proof that $Q$ is spherically closed in $Y$.
\end{proof} 

The next result shows that for $\kappa>\omega$ there are many different possible spaces $Q\subset {}^{\kappa}2$ which can arise in part (b) of Theorem~\ref{characterization of hereditarily Baire}. In particular, any space $Q$ as constructed in the next proposition has size $\kappa$, has no $\kappa$-isolated points, and is (trivially) spherically closed in $X=Q$. 
This contrasts with the case $\kappa=\omega$, when of course any such $Q$ is homeomorphic to $\mathbb{Q}$.

\begin{prop}
 There is a sequence $(X_{\alpha})_{\alpha<2^{\kappa}}$ of subsets of ${}^{\kappa}2$ of size $\kappa$ without $\kappa$-isolated points such that $X_{\alpha}$ is not homeomorphic to a spherically closed subset of $X_{\beta}$ for all $\alpha,\beta<2^{\kappa}$ with $\alpha\neq\beta$. 
\end{prop}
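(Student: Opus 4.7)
The plan is to parametrize the required spaces by an almost disjoint family of cofinal subsets of $\kappa$ and then distinguish them by a splitting-set invariant extracted from the local topological structure at each point.

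First I would fix an almost disjoint family $\{A_\xi:\xi<2^\kappa\}$ of cofinal subsets of $\kappa$, i.e.\ $|A_\xi\cap A_\eta|<\kappa$ whenever $\xi\ne\eta$; such a family exists under the standing assumption $\kappa^{<\kappa}=\kappa$. For each $\xi$ I set
\[
X_\xi:=\{x\in{}^\kappa 2 : \operatorname{supp}(x)\subseteq A_\xi \text{ and } \operatorname{supp}(x) \text{ is bounded in }\kappa\},
\]
where $\operatorname{supp}(x):=\{\alpha<\kappa:x(\alpha)=1\}$. Then $|X_\xi|=\kappa$ by $\kappa^{<\kappa}=\kappa$. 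Since $A_\xi$ is cofinal, for any $x\in X_\xi$ and $\alpha<\kappa$ there is $\gamma\in A_\xi$ above $\max(\alpha,\sup\operatorname{supp}(x))$, and flipping $x(\gamma)$ to $1$ produces a distinct element of $X_\xi$ agreeing with $x$ on $[0,\alpha)$; hence $X_\xi$ has no $\kappa$-isolated points. Finally, $X_\xi$ is trivially spherically closed in itself.

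Next, for any $Y\subseteq{}^\kappa 2$ and $y\in Y$, I introduce the splitting set
\[
\operatorname{Split}(y,Y):=\{\alpha<\kappa:N_{y\upharpoonright\alpha}\cap Y\supsetneq N_{y\upharpoonright\alpha+1}\cap Y\}.
\]
A direct calculation shows $\operatorname{Split}(y,X_\xi)=A_\xi$ for every $y\in X_\xi$: the level $\alpha$ witnesses strict shrinkage of the canonical chain exactly when one can flip $y(\alpha)$ within $X_\xi$, which happens iff $\alpha\in A_\xi$. Monotonicity of the definition gives $\operatorname{Split}(y,Y)\subseteq\operatorname{Split}(y,X_\eta)=A_\eta$ whenever $y\in Y\subseteq X_\eta$. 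Granting the invariance lemma described below, suppose for contradiction that $\xi\ne\eta$ and $\varphi:X_\xi\to Y$ is a homeomorphism with $Y\subseteq X_\eta$ spherically closed in $X_\eta$. Picking $x\in X_\xi$ and $y:=\varphi(x)$ yields $A_\xi\subseteq^*\operatorname{Split}(y,Y)\subseteq A_\eta$, so $|A_\xi\setminus A_\eta|<\kappa$; combined with $|A_\xi\cap A_\eta|<\kappa$ from almost disjointness, the regularity of $\kappa$ forces $|A_\xi|<\kappa$, contradicting the cofinality of $A_\xi$.

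The hard part will be the topological invariance lemma: for any homeomorphism $\varphi:X_\xi\to Y$, the sets $\operatorname{Split}(\varphi(x),Y)$ and $A_\xi$ agree modulo a set of cardinality $<\kappa$ for every $x\in X_\xi$. The obstacle is that $\operatorname{Split}(y,Y)$ is defined via the specific embedding $Y\hookrightarrow{}^\kappa 2$ rather than purely topological data. To prove the lemma, I would compare the canonical chain $(N_{y\upharpoonright\alpha}\cap Y)_{\alpha<\kappa}$ at $y=\varphi(x)$ with the $\varphi$-image of the canonical chain at $x$: both are cofinal decreasing chains of clopen neighborhoods of $y$ in $Y$ generating the same neighborhood filter. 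An interleaving argument in the $\kappa$-ultrametric setting, exploiting the weight bound $\kappa$ and the $\kappa$-additivity of basic clopen sets, should then show that the set of levels where genuine strict shrinkage occurs is determined, modulo a bounded subset of $\kappa$, by the topology of $(Y,y)$ alone.
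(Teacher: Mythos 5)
There is a fatal gap, and it is exactly at the step you flag as ``the hard part'': the invariance lemma is not merely difficult, it is false for the spaces you construct, and in fact your family does not work at all. If $A_\xi$ is a cofinal subset of the regular cardinal $\kappa$, then $A_\xi$ has order type $\kappa$, and the increasing enumeration $e_\xi\colon\kappa\to A_\xi$ induces a bijection from $Q_\kappa=\{x\in{}^\kappa2 : \mathrm{supp}(x)\text{ bounded}\}$ onto $X_\xi$ (send $x$ to the characteristic function of $e_\xi[\mathrm{supp}(x)]$). This bijection is a homeomorphism: the preimage of $N_s\cap X_\xi$ for $s$ of length $\beta$ is $N_t\cap Q_\kappa$ for the appropriate $t$ of length $\mathrm{otp}(A_\xi\cap\beta)$, and symmetrically. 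Hence all of your $X_\xi$ are pairwise homeomorphic, and since each $X_\eta$ is (trivially) spherically closed in itself, $X_\xi$ \emph{is} homeomorphic to a spherically closed subset of $X_\eta$ for every $\xi\neq\eta$. The same example (take $A_\xi$ the even and $A_\eta$ the odd ordinals) shows concretely that $\mathrm{Split}(y,Y)$ is an artifact of the chosen embedding into ${}^\kappa2$ rather than a topological invariant: a coordinate re-indexing moves the splitting levels onto an arbitrary cofinal set, so no argument can recover $A_\xi$ modulo a bounded set from the topology of $(Y,\varphi(x))$. Almost disjointness of cofinal sets is simply too fine an invariant to survive homeomorphism.

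The paper's proof gets around this by using \emph{stationarity} instead: it takes disjoint stationary $A,B\subseteq\kappa$ and the spaces $X_A$ of characteristic functions of sets $s_x$ with $\lim(s_x)\cap A=\emptyset$. A hypothetical homeomorphism $f$ of $X_A\cap N_{s_0}$ onto a spherically closed subset of $X_B$ is fed into a back-and-forth construction of interleaved stems $(s_\alpha)$, $(t_\alpha)$ whose domains agree on a club of limit levels $\gamma$; at such a $\gamma$ one argues that the relevant trace is nonempty using $\gamma\notin A$ on one side or, when $\gamma\in A$ (hence $\gamma\notin B$), using spherical closedness on the other side. Running this for $\kappa$ steps produces $x\in X_A$ with unbounded support, so $\lim(s_x)$ is a club meeting $A$, a contradiction. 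The point is that the club of matched levels produced by any homeomorphism interacts with stationary sets but destroys almost disjoint cofinal sets; the passage to $2^\kappa$ spaces is then done with disjoint unions $X^C=\bigsqcup_{\alpha\in C}X_{A_\alpha}$ indexed by a $\subseteq$-antichain of subsets of $\kappa$. If you want to rescue your plan, you would have to replace the splitting-set invariant by one of this stationary type.
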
 

\begin{proof}
  Suppose that $A\subset\kappa$. Let $X_A$ denote the set of $x\in {}^{\kappa}2$ such that $x$ is the characteristic function of a set $s_x\subset \kappa$ and $\mathrm{lim}(s_x)\cap A=\emptyset$, where $\mathrm{lim}(s_x)$ denotes the set of limit points of elements of $s_x$.  If $A$ is stationary in $\kappa$, then $s_x$ is bounded for all $x\in X_A$, so $|X_A|=\kappa$ by our assumption that $\kappa^{<\kappa}=\kappa$.

Suppose that $A$, $B$ are disjoint stationary subsets of $\kappa$. 
We claim that for all $s_0\in {}^{<\kappa}\kappa$ with $X_A\cap N_{s_0}\neq\emptyset$, there is no function $f\colon X_{A}\cap N_{s_0}\rightarrow X_{B}$ which embeds $X_{A}\cap N_{s_0}$ homeomorphically as a spherically closed subset of $X_{B}$. Indeed, if there were such a function $f$, we will construct strictly increasing continuous sequences $(s_{\alpha})_{\alpha<\kappa}$ and $(t_{\alpha})_{\alpha<\kappa}$ in ${}^{<\kappa}\kappa$ such that for all $\alpha<\kappa$:
\begin{enumerate} 
\item $f[X_A\cap N_{s_{\alpha+1}}]\subset N_{t_{\alpha}}$, 
\item $X_{A}\cap N_{s_{\alpha}} \neq\emptyset$, 
\item $\mathrm{dom}(s_{\alpha+1})> \mathrm{dom}(t_{\alpha})$, 
\item $s_{\alpha+1}(\beta)=1$ for some $\beta\in
  \dom(s_{\alpha+1})\smallsetminus\dom(s_{\alpha})$;
\end{enumerate} 
and symmetrically:
\begin{enumerate} 
\setcounter{enumi}{4}
\item $f^{-1}[X_A\cap N_{t_{\alpha+1}}]\subset N_{s_{\alpha+1}}$, 
\item $f[X_A\cap N_{s_0}]\cap N_{t_{\alpha}} \neq\emptyset$, 
\item $\mathrm{dom}(t_{\alpha+1})> \mathrm{dom}(s_{\alpha+1})$, and 
\item $t_{\alpha+1}(\beta)=1$ for some $\beta\in\mathrm{dom}(t_{\alpha+1})\smallsetminus\mathrm{dom}(t_{\alpha})$.
\end{enumerate} 
We choose an arbitrary $t_0\in {}^{<\kappa}\kappa$ with $f[N_{s_0}]\cap N_{t_0}\neq 0$. 
The construction can be carried out in successor steps since $f$ and $f^{-1}$ are continuous. 
Suppose that $\gamma<\kappa$ is a limit and that we have constructed $(s_{\alpha}, t_{\alpha})_{\alpha<\gamma}$. Let $s_{\gamma}=\bigcup_{\alpha<\gamma} s_{\alpha}$ and $t_{\gamma}=\bigcup_{\alpha<\gamma} t_{\alpha}$. Then $\mathrm{dom}(s_{\gamma})=\mathrm{dom}(t_{\gamma})$ and $f[X_A\cap N_{s_{\gamma}}]=f[X_A\cap N_{s_0}]\cap N_{t_{\gamma}}$. 
We claim that one of the sets $X_A\cap N_{s_{\gamma}}$ and $f[X_A\cap N_{s_0}]\cap N_{t_{\gamma}}$, and therefore both, are nonempty. 
First suppose that $\gamma\notin A$. Since $X_A\cap N_{s_{\alpha}} \neq\emptyset$ for all $\alpha<\gamma$, we have $X_A\cap N_{s_{\gamma}}\neq\emptyset$. If on the other hand $\gamma\in A$, then $\gamma\notin B$, since $A,B$ are disjoint. Since $f[X_A\cap N_{s_0}]\cap N_{t_{\alpha}}\neq\emptyset$ for all $\alpha<\gamma$, and since $f[X_A\cap N_{s_0}]$ is spherically closed in $X_B$, we have $f[X_A\cap N_{s_0}]\cap N_{t_{\gamma}}\neq\emptyset$. 
Therefore the construction can be continued for $\kappa$ steps.

We have thus constructed $x=\bigcup_{\alpha<\kappa} s_{\alpha} \in X_{A}$ and $y=\bigcup_{\alpha<\kappa} t_{\alpha}\in X_{B}$ with $f(x)=y$. Suppose that $x$ is the characteristic function of $s_x\subset \kappa$. Then $\lim(s_x)$ is a club and hence $\lim(s_x)\cap A\neq\emptyset$, contradicting the fact that $x\in X_A$.

Finally, suppose that $(C_{\alpha})_{\alpha<2^{\kappa}}$ is a sequence of subsets of $\kappa$ with $C_{\alpha}\not\subset C_{\beta}$ for all $\alpha,\beta<2^{\kappa}$ with $\alpha\neq \beta$.  Suppose that $(A_{\alpha})_{\alpha<\kappa}$ is a sequence of pairwise disjoint stationary subsets of $\kappa$.  Let $X^C=\bigsqcup_{\alpha\in C} X_{A_{\alpha}}$ for $C\subset\kappa$.

We claim that $X^{C_{\alpha}}$ cannot be embedded as a spherically closed subset of $X^{C_{\beta}}$ for all $\alpha,\beta<2^{\kappa}$ with $\alpha\neq\beta$.  Suppose that $f\colon X^{C_{\alpha}}\rightarrow X^{C_{\beta}}$ is such an embedding.  Suppose that $\gamma\in C_{\alpha}\smallsetminus C_{\beta}$ and $\delta\in C_{\beta}$ with $f[X_{A_{\gamma}}]\cap X_{A_{\delta}}\neq\emptyset$.  Since $f$ is a homeomorphism, we can find some $s,t\in {}^{<\kappa}2$ such that $N_s\cap X_{A_{\gamma}}\neq\emptyset$ and $f[N_s\cap X_{A_{\gamma}}]=N_t\cap X_{A_{\delta}}$.  However, we have seen that $X_{A_{\gamma}}$ does not embed as a spherically closed subset of $X_{A_{\delta}}$.
\end{proof}

\bibliographystyle{alpha}
\begin{singlespace}
  \bibliography{choquet}
\end{singlespace}

\end{document}